\DeclareMathAlphabet\oldmathcal{OMS}        {cmsy}{b}{n}
\SetMathAlphabet    \oldmathcal{normal}{OMS}{cmsy}{m}{n}
\DeclareMathAlphabet\oldmathbcal{OMS}       {cmsy}{b}{n}
\newtheorem{theorem}{Theorem}[section]
\newtheorem{lemma}[theorem]{Lemma}
\newtheorem{proposition}[theorem]{Proposition}
\newtheorem{notation}[theorem]{Notation}
\newenvironment{example}{\medskip \refstepcounter{theorem}
\noindent  {\bf Example \thetheorem}.\rm}{\,}
\newsavebox\CBox
\newcommand\hcancel[2][0.5pt]{%
  \ifmmode\sbox\CBox{$#2$}\else\sbox\CBox{#2}\fi%
  \makebox[0pt][l]{\usebox\CBox}%  
  \rule[0.5\ht\CBox-#1/2]{\wd\CBox}{#1}}
\theoremstyle{remark}
\newtheorem{rem}[theorem]{Remark}
\theoremstyle{definition}
\newcommand{\intprod}{\mathbin{\raisebox{\depth}{\scalebox{1}[-1]{$\lnot$}}}}
\newcommand{\nts}[1]{\marginpar{#1}}
\renewcommand{\nts}[1]{}
\newcommand{\R}{\mathbf{R}}
\def\bfZ{\mbox{{\bf Z}}}
\def\DF{\mbox{{\bf DF}}}
\def\mF{\mathcal{F}}      \def\mB{\mathcal{B}}  \def\mL{\mathcal{L}}      \def\mN{\mathcal{N}}  \def\mE{\mathcal{E}}  \def\mX{\mathcal{X}}    \def\mO{\mathcal{O}}  \def\mA{\mathcal{A}}
\def\metric{\Omega}
\def\euler{{\bf{e}}}
 \def\bD{\mathbb D}  \def\bC{\mathbb C} \def\bR{\mathbb R}       \def\bP{\mathbb P}
\def\R{\mathbb R}\def\C{\mathbb C}
\def\fract#1#2{\raise4pt\hbox{$ #1 \atop #2 $}}
\def\kt{\mathfrak{t}}
\def\ra{\rightarrow }
\def\bfZ{{\bf Z} }
\def\valueMOM{{\bf h}}
\def\mom{{ \mu}}
\def\b{b}
\def\excep{E}
\def\bu{Z}
\def\classO{\delta}
\def\quot{{\bf q}}
\def\w{\tilde{{\bf w}}}
\def\we{{\bf w}}
\def\metric{{\Omega}}
\def\cG{G}
\def\bm{p}
\def\cst{\overline{c}}
\def\biho{\psi}
\def\A{\alpha}
\def\B{\beta}
\begin{document}

\title[Localizing the Donaldson--Futaki invariant]{Localizing the Donaldson--Futaki invariant}
\date{\today}

\author{Eveline Legendre}

 \address{Eveline Legendre\\ Universit\'e Paul Sabatier\\
 Institut de Math\'ematiques de Toulouse\\ 118 route de Narbonne\\
 31062 Toulouse\\ France}
\email{eveline.legendre@math.univ-toulouse.fr}

 \thanks{The author was partially supported by France ANR project EMARKS No ANR-14-CE25-0010. }

\maketitle

\begin{abstract}
We use the equivariant localization formula to prove that the Donaldson--Futaki invariant of a compact smooth (K\"ahler) test configuration coincides with the Futaki invariant of the induced action on the central fiber when this fiber is smooth or have orbifold singularities. We also localize the Donaldson--Futaki invariant of the deformation to the normal cone.
\end{abstract}

\section{Introduction}

 The Yau--Tian--Donaldson conjecture has been a central theme of K\"ahler geometry in the last 30 years. In one side of this conjectured correspondance, one tests the K--stability of a K\"ahler manifold $(X,[\omega])$ using the {\it Donaldson--Futaki invariant} of tests configuration over it.   

This invariant has a long history, the first candidate was the {\it generalized} Futaki invariant of the central fiber as suggested by Tian~\cite{Tian}. The work of Tian was motivated by Yau conjecture about K\"ahler--Einstein metrics~\cite{yau} and he was working in the Fano context but the definition he suggested does not need this hypothesis. Then Donaldson reinterpreted and generalized this invariant in the polarized case, in terms of the coefficients of the Hilbert series describing the asymptotics expansion of the dimension and weights of the action on the space of sections of the central fiber \cite{don:scalar}. Starting from this generalization, Odaka~\cite{odaka} and Wang~\cite{wang}, see also \cite[p.315]{don:scalar}, exhibited an intersection product formulation of this now called Donaldson--Futaki invariant. One gain of this formulation is a direct interpretation in the non-polarized/transcendental case as exploited in \cite{DervanRoss,zakarias}.

In this note, we start with the intersection product formulation of the Donaldson--Futaki invariant $\DF$ of a compact smooth test configuration $(\mX,[\Omega])$ over a smooth compact K\"ahler manifold as a definition. We will recall the precise definition of a compact K\"ahler test configuration in Section~\ref{subsectTCDF0} and prove the following : we observe that $\DF(\mX,[\Omega])$ is the intersection of $S^1$--equivariant closed forms on $\mX$ and show that there exists smooth compacts submanifolds $Z_1,\dots, Z_k$, of $\mX$, all lying in the central fiber such that    

   \begin{equation}\label{eqDFintro}
 \DF(\mX,\metric) = \sum_{i=1}^k \int_{Z_i} \frac{\mA_{i,\Omega}^n \wedge \mB_{i,\Omega}}{\euler(N_{Z_i}^\mX)}
  \end{equation} where $\mA_{i,\Omega}, \mB_{i,\Omega}\in \oplus_m \Gamma(\bigwedge^m T^*{Z_i})$ are closed forms explicitly given in Proposition~\ref{theoLOCDFinvariant} and $\euler(N_{Z_i}^\mX)$ is the equivariant Euler class of the normal bundle of $Z$ in $\mX$. 

Using this expression, we show that $\DF(\mX,\metric)$ coincides with the classical Futaki invariant of the induced $\C^*$--action on the central fiber when the central fiber is smooth or admits orbifold singularities. 

\begin{theorem}\label{theoINTRO1} Let $(\mX^{n+1},[\metric])$ be a regular (compact) test configuration over $(X^n,[\omega])$ with $\pi: \mX \ra \bP^1$ and the $\bC^*$--action $\nu: \bC^* \hookrightarrow \mbox{Aut}(\mX)$. Let $V=\nu_*(\partial \theta)$ be the vector field induced by the underlying $S^1$--action and $\mom :\mX\ra \bR$ be any Hamiltonian function for $V$.
Assume that the central fiber $X_0:=\pi^{-1}(0)$ inherits a K\"ahler orbifold structure from the inclusion $\iota_{0}: X_0\hookrightarrow \mX^{n+1}$. Then    
     \begin{equation}\label{eqDFinvLOCintro}\begin{split}
  \frac{\DF(\mX,[\metric])}{n!}  = -\pi \emph{Fut}_{(X_0,[\metric_0])}(J_0V)
  \end{split}
  \end{equation} where $\metric_0=\iota_{X_0}^*\metric$ is the pull-back on $X_0$. 
\end{theorem}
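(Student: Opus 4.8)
The plan is to deduce Theorem~\ref{theoINTRO1} from the localization formula \eqref{eqDFintro} of Proposition~\ref{theoLOCDFinvariant} by recognizing its right-hand side as the equivariant-localization expression for the Futaki invariant of $(X_0,[\metric_0])$. First I would record the geometric setup: since $\nu$ covers the standard $\bC^*$-action on $\bP^1$ fixing $0$ and $\infty$, the fiber $X_0=\pi^{-1}(0)$ is $S^1$-invariant, and under the orbifold hypothesis it is a compact Kähler orbifold on which $V$ restricts to the Hamiltonian field of the induced action with Hamiltonian $\mom_0:=\iota_{X_0}^*\mom$. The submanifolds $Z_1,\dots,Z_k$ appearing in \eqref{eqDFintro}, being the components of $\mX^{S^1}$ lying in the central fiber, are then exactly the connected components of the fixed-point set of the induced $S^1$-action on $X_0$.

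Next I would factor the equivariant Euler class along the inclusions $Z_i\hookrightarrow X_0\hookrightarrow\mX$. Because $X_0$ is a fiber of $\pi$, its normal bundle $N_{X_0}^{\mX}$ is the pullback $\pi^*T_0\bP^1$, hence topologically trivial, and $S^1$ acts on it through a single nonzero weight $a$ induced by the linearized action on $T_0\bP^1$; thus $\euler(N_{X_0}^{\mX})|_{Z_i}$ is the constant $a$ times the equivariant parameter, with no differential-form part. Using the splitting $N_{Z_i}^{\mX}=N_{Z_i}^{X_0}\oplus N_{X_0}^{\mX}|_{Z_i}$ and multiplicativity of the equivariant Euler class, each summand of \eqref{eqDFintro} becomes $\tfrac{1}{a}$ times an integral over $Z_i$ carrying $\euler(N_{Z_i}^{X_0})$ in the denominator. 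The orbifold-smoothness of $X_0$ is precisely what makes this legitimate, since it guarantees that the relative-canonical data restricts to the genuine (orbifold) canonical/Ricci datum $K_{X_0}$ of the fiber.

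The heart of the argument is then to match the numerators. I would restrict the explicit equivariant forms $\mA_{i,\Omega}$ and $\mB_{i,\Omega}$ of Proposition~\ref{theoLOCDFinvariant} to $Z_i$ and verify that, once the fiber-normal factor is pulled out of the Euler class, the remaining data are exactly the restrictions to $Z_i$ of the equivariant Kähler form $\metric_0+\mom_0$ and of the equivariant Ricci/scalar-curvature form of $(X_0,\metric_0)$ that assemble the Futaki integrand. Running the Atiyah--Bott--Berline--Vergne formula on the orbifold $X_0$ in the reverse direction, the sum $\sum_i\int_{Z_i}(\cdots)/\euler(N_{Z_i}^{X_0})$ collapses to a single integral over $X_0$, which by the moment-map expression of the classical Futaki invariant equals $\mathrm{Fut}_{(X_0,[\metric_0])}(J_0V)$ up to a universal constant. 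Collecting the weight factor $1/a$, the Chern--Weil normalization responsible for the $\pi$, and the $1/n!$ from expanding $\metric_0^n$ yields \eqref{eqDFinvLOCintro}.

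I expect the last matching step to be the main obstacle. One must track carefully (i) the orbifold multiplicities entering both localization formulas so that they cancel consistently, (ii) the normalization of the Hamiltonian $\mom$ against the holomorphy potential of $J_0V$, which is what ultimately produces the factor $-\pi$, and (iii) the fact that the equivariant representatives fixed in Proposition~\ref{theoLOCDFinvariant} restrict on $X_0$ to the standard equivariant symplectic and Ricci forms, rather than to cohomologous but differently-scaled ones. Everything else is bookkeeping once these identifications are pinned down.
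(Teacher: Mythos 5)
Your overall strategy --- localize $\DF$ onto the fixed components $Z$ lying in $X_0$, factor $\euler(N_Z^{\mX})$ through the chain $Z\subset X_0\subset \mX$, and then run the Atiyah--Bott--Berline--Vergne formula backwards on the orbifold $X_0$ to reassemble the Futaki invariant --- is exactly the route the paper takes (Proposition~\ref{theoLOCDFinvariant} followed by Theorems~\ref{theoSMOOTHcentralFiber} and~\ref{theoORBIFOLDcentralFiber}). However, two of the steps you treat as given are precisely where the substance of the proof lies, and one of them is stated incorrectly.

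First, the claim that $N_{X_0}^{\mX}\simeq \pi^*T_0\bP^1$ ``because $X_0$ is a fiber of $\pi$'' fails in general: $0$ is typically a critical value of $\pi$ (the paper explicitly cites test configurations with smooth central fiber along which $\pi$ is not a submersion), and under the orbifold hypothesis $X_0$ is not even a manifold, so $N_{X_0}^{\mX}$ does not exist as a bundle. The paper instead establishes triviality of the normal bundle only over the regular part of $X_0$, by producing an invariant section via the symplectic slice theorem, and pins down the weight of the $S^1$--action on the normal direction to be exactly $+1$ (not an unspecified $a$) by comparing the gradients of $\exp^*\mom$ and $(\pi\circ\exp)^*\mom_{FS}$ together with Schwarz's theorem on invariant functions. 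That the weight equals $1$ is not a cosmetic point: it is what makes the ``$+1$'' term coming from $\pi^*\omega_{FS}-\pi^*\mom_{FS}$ in \eqref{eqDFinvLOCintrolap} cancel against the $\we_0$--summand in the Ricci term, and without this cancellation the numerators do not reduce to the Futaki integrand of $(X_0,\metric_0)$.

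Second, in the orbifold case the identity $\euler(N_Z^{\mX})=-d_Z\,\euler(N_Z^{X_0})/2\pi$, with $d_Z$ the order of the generic isotropy group of $Z$ and $N_Z^{X_0}$ defined only through the uniformizing charts, is not bookkeeping: the short exact sequence of normal bundles you invoke is unavailable for singular $X_0$. The paper has to (i) show that the weights upstairs and in the uniformizing chart satisfy $\sum_{i=0}^{\ell_Z}\langle\we_i^Z,\partial\theta\rangle=1+\sum_{i=1}^{\ell_Z}\langle\w_i^Z,\partial\theta\rangle$, and (ii) construct an explicit $S^1$--equivariant map $\Psi$ from a neighbourhood of the zero section of $N_Z^{X_0}\times\underline{\bC}$ onto one of $N_Z^{\mX}$ that pulls the equivariant Thom form back to $d_Z$ times a Thom form. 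Until these two points are supplied, your argument only covers the case where $\pi$ is a submersion along a smooth central fiber.
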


Test configurations with smooth central fiber but with $\pi$ not being a submersion are known to exist see~\cite{ADVLN,Tian}. Examples with $\pi$ being a submersion also fulfill the hypothesis of the Theorem, such examples include the so-called "product tests configuration" see \S\ref{subsecPRODUCT}.

Donaldson gave in \cite{don:scalar} a very simple proof of this fact in the polarized case when the central fiber is smooth using the (equivariant) Hirzebruch--Riemann--Roch formula and the definition of $\DF$ as a coefficient of the expansion of the normalized weight of the induced action on $H^0(\mL^k)$. Our approach is differential-geometric and does not distinguish the polarized and transcendental cases but doesn't provide a more direct proof. A feature of our method is that it provides a formulation of the Donaldson--Futaki invariant as the evaluation of some classes on the (compact smooth) manifolds lying in the possibly singular central fiber, see Proposition~\ref{theoLOCDFinvariant}. Using this and the conclusion of~\S\ref{ssectEqClassDF}, as well as \cite[Proposition 6]{EG} it is clear that $\DF$ is an equivariant class on the central fiber, see~\S\ref{subsectionSINGcf}. The hypothesis of Theorem~\ref{theoINTRO1} is used to make sense of that equivariant class as the classical Futaki invariant.      

Another asset of our proof is that it should be valid when replacing the K\"ahler structures involved by almost-K\"ahler ones which might be useful in the attemp to extend the Yau--Tian--Donaldson conjecture to this setting, see eg~\cite{KL}.

 We prove Theorem~\ref{theoINTRO1} using the Equivariant Localization Formula of Atiyah--Bott\cite{AtiyahBott} and Berline--Vergne~\cite{BerlineVergne}, recalled in Section~\ref{secEQUIVcohomology}. The Equivariant Localization Formula has been used before in K\"ahler geometry \cite{DingTian, Fu2, MaschlerThesis, Tian}. In particular, Tian used it successfully to exhibit examples of Fano K\"ahler manifolds admitting no compatible and smooth K\"ahler--Einstein metric \cite{TIANbookalg}. Also Wang used it, on $\bP^1$, to give the intersection formulation of the Donadson--Futaki invariant \cite{wang}. This latter technic has been used recently successfully by Inoue in \cite{inoue} where he highlights also the equivariant interection formula \eqref{eqEQUIVfut} of the invariant as we do here. \\

 In Section~\ref{s:DFnormalCone}, we use our formula over fixed point set to compute the Donaldson--Futaki invariant of tests configuration obtained by deformation to the normal cone of any subvariety $Y\subset X$ and relate it to the Futaki invariant of the exceptional divisor.

{\bf Aknowledgement} I have been very talkative about this modest project. I thank the following people for their interest in this work: V. Apostolov, P. Carrillo-Rouse, R. Dervan, P. Gauduchon, H. Guenancia, L. Manivel, G. Maschler and D. Witt Nystr\"om. \\

\section{The equivariant cohomology and localization formula} \label{secEQUIVcohomology}

 We briefly introduce the tools of equivariant cohomology we will use later. We refer to the books \cite{BerlineGetzlerVergne,GSbook} for more details and proof. We express the results for compact torus acting on smooth compact (symplectic or complex) manifold but the theory is developed in more complicated case. 
 
\subsection{Equivariant cohomology}
Let $M$ be a smooth manifold with an effective and smooth action of a compact torus $T$, that is $\nu: T \hookrightarrow \mbox{diffeo}(M)$. We will mostly work with the induced infinitesimal action of $\kt:=\mbox{Lie } T$. For $a\in \kt$, we denote the induced vector field on $M$ by $V_a=\phi_*(a)$, equivalently $$V_a(p) =\frac{d}{dt}_{t=0} \nu(\exp(t a))(p).$$      

An equivariant form is a polynomial map $\biho : \kt \ra \Omega^*(M)^T$ where $\Omega^*(M)^T$ is the graded complex of $T$-invariant forms. We denote $\Omega^*_{T}(M)$ the set of equivariant forms over $M$. There is an appropriate notion of degree for these forms so that the equivariant differential $d_\kt: \Omega^*_{T}(M)\longrightarrow \Omega^*_{T}(M)$ which is defined by $$a\; \mapsto \; (d_{\kt} \biho)_a := (d-{ }_{V_a}\intprod)\biho_a$$ increases the degree of $1$. Here ${ }_{V_a}\intprod \biho$ is the contraction of $\biho$ by $V_a$.  An equivariant form $\biho\in \Omega^*_{T}(M)$ is said {\it equivariantly closed} if $d_{\kt} \biho\equiv 0$.

\begin{example}\label{exampleSYMP}
 Consider $(M^{2n}, \omega)$ a symplectic compact manifold admitting a Hamiltonian action of a torus $T$, $\nu: T\hookrightarrow \mbox{Ham}(M,\omega)$ with momentum map $\mom : M\ra \kt^*$, that is $$-d\langle\mom, a\rangle =\omega(V_a,\cdot)$$ for any $a\in\kt$. The form $a\mapsto \omega -\langle\mom,a\rangle$ is equivariantly closed, as well as, $(\omega -\langle\mom,\cdot\rangle)^k$ and, thus, $e^{\omega -\langle\mom,\cdot\rangle}:=\sum_{k=0}^{+\infty} \frac{(\omega -\langle\mom,\cdot\rangle)^k}{k!}$.  \\
 \end{example}
    
    It is known that $d_\kt^2=0$ and the equivariant cohomology can be defined as the cohomology of the associated chain complex: $$H_T^*(M):= \frac{\ker d_\kt }{\mbox{im }d_\kt}$$ see~\cite{BerlineGetzlerVergne}. There is another definition of equivariant cohomology introduced by Atiyah and Bott~\cite{AtiyahBott} and valid for non connected compact Lie group $G$, which is the cohomology of the quotient $M_G:= EG\times_G M$ where $EG\ra BG$ is the universal principal $G$--bundle. In the case of connected compact Lie group both definitions coincide.

\subsection{Localization formula}

Let $\bfZ := \mbox{Fix}_M T$ be the fixed points set of $T$ in $M$. It is known, see eg. \cite{BerlineGetzlerVergne,GSbook}, that $\bfZ$ consists in a disjoint union of smooth submanifolds $\bfZ=\sqcup Z$ of even codimension. Given a connected component, say $Z \subset\bfZ$ of codimension $2\ell_Z$, the normal bundle $\bm : N_Z^M \longrightarrow Z$ bears a complex structure induced by the action \cite[\S 8.5]{GSbook} and at any given point $z\in Z$, $(N_Z^M)_z$ splits into a sum of (complex) lines $(N_Z^M)_z=\oplus_{j} N^Z_{z,j}$ according to the action of $T$, acting on $N^Z_{z,j}$ with weight $\we^Z_j\in\kt^*$, here $j$ runs from $1$ to $\ell_Z$.  

The {\it equivariant Localization Formula} as we use many times in this note is the following. 
\begin{theorem}\label{extDHteo}[Atiyah--Bott \cite{AtiyahBott}, Berline--Vergne \cite{BerlineVergne}] Let $\biho \in \Omega^*_{T}(M)$ be an equivariently closed form. For any generic $a\in \kt$ we have \begin{equation}\label{formDHext}
 \int_M \biho_a = \sum_Z \int_Z \frac{\iota_Z^*\biho_a }{\euler(N^Z_a)}.
\end{equation} $\euler(N^Z)^{-1}$ is the inverse of the equivariant Euler class of $N_Z^M$ in $H_{S^1}^*(Z)$. 
\end{theorem}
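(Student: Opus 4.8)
The plan is to prove the formula by the Berline--Vergne deformation argument: reduce the global integral to boundary contributions near $\bfZ$ and then evaluate these in the local model of the normal bundle. First fix a generic $a \in \kt$; genericity guarantees that the zero set of $V_a$ coincides with the full fixed-point set $\bfZ = \mbox{Fix}_M T$, so that $V_a$ is nonvanishing precisely on $M \setminus \bfZ$. Choose a $T$-invariant Riemannian metric $g$ on $M$ and set $\theta := g(V_a, \cdot) \in \Omega^1(M)^T$. Its degree-zero component under $d_\kt$ is $-\,\theta(V_a) = -\lvert V_a \rvert_g^2$, which is strictly negative on $M \setminus \bfZ$.

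Second, on $M \setminus \bfZ$ I would exhibit $\biho_a$ as equivariantly exact. Since $(d_\kt\theta)_a = d\theta - \lvert V_a\rvert_g^2$ has invertible (nonvanishing) lowest-order term, its formal inverse $(d_\kt\theta)_a^{-1}$ --- defined by expanding $\bigl(-\lvert V_a\rvert_g^2 + d\theta\bigr)^{-1}$ as a geometric series, terminating because $d\theta$ is nilpotent under wedge --- is a well-defined $T$-invariant inhomogeneous form, itself equivariantly closed. Using $d_\kt^2 = 0$, $d_\kt \biho = 0$, and $d_\kt(d_\kt\theta) = 0$, a direct computation gives
$$
d_\kt\!\left( \frac{\theta \wedge \biho_a}{(d_\kt\theta)_a} \right) = \frac{(d_\kt\theta)_a \wedge \biho_a}{(d_\kt\theta)_a} = \biho_a
$$
on $M \setminus \bfZ$. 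Writing $\eta := \theta \wedge \biho_a / (d_\kt\theta)_a$, I excise disjoint $T$-invariant tubular neighborhoods $U_\epsilon = \sqcup_Z U_\epsilon(Z)$ of the components of $\bfZ$ and apply Stokes' theorem. Because ${}_{V_a}\intprod$ strictly lowers form-degree, the top-degree part of $d_\kt\eta = d\eta - {}_{V_a}\intprod\,\eta$ is exactly $d\bigl(\eta^{[2n-1]}\bigr)$, whence
$$
\int_{M \setminus U_\epsilon} \biho_a = \int_{M\setminus U_\epsilon} d\,\eta^{[2n-1]} = -\sum_Z \int_{\partial U_\epsilon(Z)} \eta .
$$
Letting $\epsilon \to 0$, the left side converges to $\int_M \biho_a$ since $\biho_a$ is smooth across $\bfZ$.

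Third comes the local computation, which is where I expect the main difficulty to lie. For each component $Z$ of codimension $2\ell_Z$ I would model a neighborhood on the total space of $\bm : N_Z^M \to Z$, use the $T$-invariant decomposition $(N_Z^M)_z = \oplus_j N^Z_{z,j}$ into weight lines with weights $\we^Z_j$, and choose the metric, hence $\theta$, adapted to this linear structure. The boundary integral $-\int_{\partial U_\epsilon(Z)}\eta$ then becomes a fiber integral over the sphere bundle of $N_Z^M$, which I would evaluate by pushing forward along $\bm$ and taking $\epsilon \to 0$. The outcome is the equivariant Thom/Euler computation: the fiberwise integral produces exactly the inverse equivariant Euler class, where in terms of the weight decomposition and the curvature of $N_Z^M$ one has $\euler(N^Z_a) = \prod_j \bigl(\langle \we^Z_j, a\rangle + \text{curvature}\bigr)$. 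Since $a$ is generic, each factor $\langle \we^Z_j, a\rangle \neq 0$, so the lowest-order term is invertible and the expansion is legitimate. Collecting terms yields $-\int_{\partial U_\epsilon(Z)}\eta \to \int_Z \iota_Z^*\biho_a / \euler(N^Z_a)$, and summing over $Z$ gives \eqref{formDHext}. The delicate point throughout is controlling the $\epsilon \to 0$ limit of the fiber integral and verifying that only the radial part of $\theta$ survives, so as to reproduce precisely the Euler-class denominator rather than spurious curvature contributions.
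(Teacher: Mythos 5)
The paper does not actually prove Theorem~\ref{extDHteo}: it is quoted from Atiyah--Bott and Berline--Vergne, and the reader is referred to \cite{BerlineGetzlerVergne,GSbook} for proofs. Your proposal is therefore an outline of the classical Berline--Vergne argument rather than an alternative to anything in the text, and as an outline its first two steps are sound. With $\theta=g(V_a,\cdot)$ for a $T$-invariant metric, invariance of $\theta$ gives $d_\kt(d_\kt\theta)=0$ (since $d_\kt^2\theta=-\mathcal{L}_{V_a}\theta=0$), the degree-zero part $-\lvert V_a\rvert^2_g$ of $(d_\kt\theta)_a$ is invertible off $\bfZ$ so the terminating geometric series for $(d_\kt\theta)_a^{-1}$ makes sense and is equivariantly closed, and the identity $d_\kt\bigl(\theta\wedge(d_\kt\theta)_a^{-1}\wedge\biho_a\bigr)=\biho_a$ together with Stokes' theorem (only the exterior-derivative part of $d_\kt$ contributes in top degree) correctly reduces $\int_M\biho_a$ to boundary integrals around the fixed components.

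The genuine mathematical content, however, sits entirely in your third step, which you describe but do not carry out, and two points there need real work. First, the $\epsilon\to 0$ limit of $-\int_{\partial U_\epsilon(Z)}\eta$: you must show that the fiberwise integral of $\theta\wedge(d_\kt\theta)_a^{-1}$ over the sphere bundle of $N_Z^M$ converges and equals the pushforward of an equivariant form representing $\euler(N^Z)^{-1}$. The standard route replaces $\theta$ near $Z$ by the model one-form built from a connection on $N_Z^M$ and the linear vector field; for this to be legitimate you need an equivariant tubular neighbourhood theorem and, crucially, an argument that the limit is independent of the choice of $\theta$ (e.g.\ because the difference of two choices of $\eta$ is $d_\kt$ of something with controlled blow-up near $Z$). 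Without that independence statement, your phrase ``choose the metric, hence $\theta$, adapted to this linear structure'' is an unjustified reduction. Second, normalization: the paper's convention \eqref{eq:equivEULERsplit} deliberately absorbs a factor $(-2\pi)^{-\ell_Z}$ into $\euler(N^Z_a)$ so that \eqref{formDHext} carries no explicit powers of $2\pi$; your proposed $\euler(N^Z_a)=\prod_j\bigl(\langle\we^Z_j,a\rangle+\mbox{curvature}\bigr)$ does not match this in sign or in powers of $2\pi$, and the sphere volumes produced by the fiber integration are exactly what these factors are meant to cancel. You must verify that your local computation lands on the paper's normalization, otherwise the formula as you would derive it differs from \eqref{formDHext} by a nonzero constant on each component.
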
 

Here, a point of $\kt$ is said generic if the zero set of $V_a$ is the fixed points set of $T$. \\ 

Recall that the equivariant Euler (respectively Todd, Thom, Chern...) class of a rank $r$ $G$--equivariant bundle $E\ra M$ is the Euler (respectively Todd, Thom, Chern...) class of the corresponding rank $r$ bundle $E_G\ra M_G$. When the splitting into line bundles is global $N_Z^M=\oplus_{j} N^Z_{j}$ the equivariant Euler class of $N_Z^M$ is just \begin{equation}\label{eq:equivEULERsplit} a\mapsto  \euler(N^Z_a)= \frac{1}{(-2\pi)^{\ell_Z}}\prod_{j=1}^{\ell_Z} (2\pi c_1(N^Z_j) - \langle\we^Z_j,a \rangle).\end{equation}

\begin{rem} We use a different convention than most authors (see eg. \cite{BerlineGetzlerVergne}) in incorporating $(-2\pi)^{\ell_Z}$ in the notation of $\euler(N^Z_a)$.  
\end{rem}

\begin{example}\label{exampleP1} Consider the Riemann sphere $\bP^1=\bC\sqcup \{\infty\}$ with the complex structure induced by $\bC \subset \bP^1$, coordinates $z=x+iy= re^{i\theta}$ with $0\leq r\leq +\infty$, metric $$g_{FS}= \frac{4}{(1+r^2)^2} (dr^2+r^2d\theta^2)$$ and symplectic form $\omega_{FS} = \frac{4}{(1+r^2)^2} rdr\wedge d\theta$. The total volume is then $4\pi$, the metric is K\"ahler-Einstein with Einstein constant equals $1$ and scalar curvature $2$. The {\it standard action} of $S^1$ is the one induced by the vector field $\partial_\theta:= x\partial_y- y\partial_y$, that is $$(e^{i \theta},z) \mapsto e^{i \theta}z,$$ a momentum map is $\mom_{FS} = (r^2-1)/(r^2+1)$ so that $\int_\bP^1 \mom_{FS}\omega_{FS}=0$ and the weight of the action on $T_0\bP^1$ is $1$ and the action on $T_\infty\bP^1$ is $-1$. \\  
\end{example}

\section{Localizing the Futaki invariant}\label{secFUTAKI}
We recall how one can use the equivariant Localization Formula~\eqref{formDHext} to give an alternative formula for the Futaki invariant \cite{MaschlerThesis}. This was done in various setting in~\cite{ DingTian, Fu2, Tian}.

Consider a K\"ahler compact manifold $(M^{2n},g,\omega,J)$ with K\"ahler class ${[\omega]} \in H^2_{dR}(M,\R)$. The Futaki invariant, see \cite{futaki}, is a character on the space of holomorphic vector fields and can be alternatively defined on the space of real holomorphic vector fields $\mF_{[\omega]} : \mathfrak{h} \ra \bR$ as $$\mF_{[\omega]}(V)= \int_M (V.\phi)\frac{\omega^n}{n!}$$ where $\rho^g= \rho_H^g+dd^c\phi$ is the Ricci form of $(M^{2n},g,\omega,J)$ and $\rho_H^g \in 2\pi c_1(M)$ is the harmonic representative. We have the decomposition, see eg \cite[Lemma 2.1.1]{PGnote}, $$V^\flat=\xi_H+df^V +d^ch^V$$ where $\xi_H$ is a harmonic $1$--form and $f^V,h^V\in C^\infty(M)$. The Futaki invariant is alternatively given by $$\mF_{[\omega]}(V)= \int_M (f^V-\overline{f^V})s_g\frac{\omega^n}{n!}$$ where $s_g$ is the scalar curvature and $\overline{f^V} =\int_M f^V\omega^n/\int_M \omega^n$, see \cite[\S 4.11]{PGnote}. In particular, for any $a\in \kt$ we have
\begin{equation}
-\mF_{[\omega]}(JV_a) = \int_M (\langle \mom,a\rangle- \overline{\langle \mom,a\rangle}) s_g \frac{\omega^n}{n!} = 2 \int_M (\langle \mom,a\rangle- \overline{\langle \mom,a\rangle})  \frac{\rho^g\wedge\omega^{n-1}}{(n-1)!}. 
\end{equation} Equivalently, 

\begin{equation}
-\mF_{[\omega]}(JV_a) = 2 \int_M \langle \mom,a\rangle  \frac{(\rho^g- \cst\omega)\wedge\omega^{n-1}}{(n-1)!} 
\end{equation} for $\cst=\cst_{[\omega]}:= \int_M \rho^g\wedge\omega^{n-1}/ \int_M \omega^{n}$. Recall, see eg \cite[Lemma 1.23.4]{PGnote}, that the Bochner formula gives in the K\"ahler setting $${}_{V_a}\intprod \rho = -\frac{1}{2}d\Delta^g \mom_a$$ where we put $\mom_a:=\langle \mom,a\rangle$. We get two equivariantly closed forms $$\A_{\omega,a}:= a\mapsto (\omega- \mom_a) \mbox{ and } a\mapsto \B_{\omega,a} =\left( \frac{n\cst}{n+1}(\omega- \mom_a)- \left(\rho^g-\frac{1}{2}\Delta^g \mom_a\right)\right)  $$ such that   
\begin{equation}\label{eqEQUIVfut}
\mF_{[\omega]}(JV_a) = \frac{2}{n!} \, [\A_{\omega,a}]^n\cup [\B_{\omega,a}] \, ([M])
\end{equation} in equivariant cohomology. 

\begin{rem}\label{remarkINDEPclass} The equivariant class of $(\omega - \mom_a)$ only depends on the de Rham class of $[\omega]$. Indeed, if $\omega-\omega' =dd^c\phi$ then $(\omega' - (\mom_a- d^c\phi(V_a)))$ is equivariently closed and the difference is equivariantly exact $$(\omega - \mom_a)- (\omega' - (\mom_a- d^c\phi(V_a))) = d_\kt (d^c\phi)_a.$$ Using the same argument $[\B_{\omega,a}]= [\B_{\omega',a}]$ in equivariant cohomology. 
\end{rem}

Recall that the momentum map $\mom$ is a constant on any connected component $Z\subset \mbox{Fix}_T(M)$. Moreover, for any $a\in \kt$ the set of values $\{\mom_a(Z)\}_Z$ depends on the choice of the momentum map and on the representative in the de Rham class $[\omega]$ only up to an overall constant. We put $$\valueMOM^Z_{{[\omega]}}(a) := \mom_a(Z)$$ and recall that for any $z\in Z$, 
\begin{equation}\label{eq:LAPweightsFIXEDpoints}(\Delta^{g}\langle\mom, b\rangle)_z = -2\langle \sum_{j=1}^{n-n_Z} \we^Z_j, b\rangle\end{equation} for a proof of this see eg.~\cite{BHL_DH}. Therefore, using Theorem~\ref{extDHteo} and \eqref{eqEQUIVfut} for $f=\frac{1}{2}\Delta^{g}\mom_a$, we get that 

\begin{equation}\label{eq:FUTinvLOCALIZED}
\begin{split}
\mF_{[\omega]}(JV_a) &= \sum_Z  \frac{ n\cst_{[\omega]}({[\omega]} - \valueMOM^Z_{{[\omega]}}(a))^{n+1}}{(n+1)!\euler(N^Z_a)} ([Z])\\
-2\sum_Z &  \frac{(2\pi c_1(M) + \sum_{i=1}^{n-n_Z}\langle \we_i, a\rangle)\cup ({[\omega]}- \valueMOM^Z_{{[\omega]}}(a))^n}{n!\euler(N^Z_a)} ([Z])
\end{split}
\end{equation}

\begin{rem} The first line of \eqref{eq:FUTinvLOCALIZED} vanishes if we take the normalization $\int_{M}\mom \omega^n =0$ which turns $\mF_{[\omega]}(JV_a)$ into a homogenous polynomial of order $n$ in the variable ${[\omega]}$. 
\end{rem}

Formulation~\eqref{eq:FUTinvLOCALIZED} has been used in~\cite{DingTian} to extend the notion of Futaki invariant to singular K\"ahler manifolds, in particular orbifolds. Recall that K\"ahler geometry can be extended to orbifolds since the analysis is essentially the same as explained in~\cite{DingTian} and many other works. Be aware, as highlighted in the first section of \cite{RT}, there is a difference between complex orbifold singularity and a (K\"ahler) metric orbifold singularity. The analysis is similar to smooth manifold setting in the second case which is the case we consider here (this is also the case considered in the aforementioned works). Indeed, an orbifold metric is a metric which pulls back to a smooth invariant metric on the uniformizing charts, then it makes sense to define the curvature, Ricci curvature and so on.

Using the localization principle for orbifolds, see~\cite{Meinrenken}, we get
\begin{equation}\label{eq:FUTinvLOCALIZEDorbifold}
\begin{split}
\mF_{[\omega]}(JV_a) &= \sum_Z  \frac{ n\cst({[\omega]} - \valueMOM^Z_{{[\omega]}}(a))^{n+1}}{d_Z(n+1)!\euler(N^Z_a)} ([Z])\\
-2\sum_Z & \frac{(2\pi c_1(M) + \sum_{i=1}^{n-n_Z}\langle \we_i, a\rangle)\cup ({[\omega]}- \valueMOM^Z_{{[\omega]}}(a))^n}{d_Z n!\euler(N^Z_a)} ([Z])
\end{split}
\end{equation} where $d_Z$ is the number of elements in the isotropy group of generic points of $Z$ and $N^Z$ is the equivariant normal bundle of $Z$ in the uniformizing charts.   

\section{Regular compact test configurations}\label{subsectTCDF0}

\subsection{Compact regular test configurations and the Donaldson--Futaki invariant}\label{subsectTCDF}

Following~\cite{DervanRoss,zakarias}, we call a (regular) {\it test configuration} for a K\"ahler manifold $(X,\omega)$ a following set of data 

\begin{enumerate}
 \item A smooth compact K\"ahler manifold $(\mX,\metric)$;
 \item $\bC^*$--action $\nu: \bC^* \hookrightarrow \mbox{Aut}(\mX)$ such that $[\nu(t)^*\metric]=[\metric]$;
 \item a $\bC^*$--equivariant surjective map $\pi : \mX\longrightarrow \bP^1$ for the standard action on $\bP^1:= \bC \cup \{+\infty\}$;
 \item a $\bC^*$--equivariant biholomorphism $$\biho: \mX^*:= \mX\backslash \pi^{-1}(0) \stackrel{\sim}{\longrightarrow} X\times \bP^1\backslash \{0\}$$ for the trivial action on $X$ times the restriction of the standard action on $\bP^1$ and $\mbox{pr}_2(\biho(x))= \pi(x)$.   
\end{enumerate}
Moreover, for $t\in \bP^1$, we denote $X_t :=\pi^{-1}(t)$ and the inclusion $\iota_t : X_t\hookrightarrow \mX$. We have, for $t\neq 0$, $X_t\stackrel{\biho_t}{\simeq} X$, that is $\biho_t = \mbox{pr}_1\circ \biho\circ \iota_t$. Finally we denote $\metric_t =\iota_t^*\metric$ and assume that
\begin{equation}\label{eqCDTcohomoTC}
 [\metric_t]= [\biho_t^*\omega].
\end{equation}

In this context, the {\it Donaldson--Futaki} invariant of $(\mX,\metric)$, when $\dim_\bC X=n$, can be defined to be: 
\begin{equation}\label{eqDFinv}
\DF(\mX,\metric) = \frac{ n}{n+1} \cst_{[ \omega]} \int_\mX \metric^{n+1} - \int_{\mX} (\rho^\metric -\pi^*\rho^{FS})\wedge \metric^n  
\end{equation}
where $\cst_{[\omega]} = \int_X\rho^\omega\wedge\omega^{n-1} / \int_X\omega^{n}.$

The quantity \eqref{eqDFinv} coincides with the invariant introduced by Donaldson in \cite{don:scalar} when the test configuration is the compactification of a polarized test configuration as shown in Odaka~\cite{odaka} and Wang~\cite{wang}, see also \cite[p.315]{don:scalar}.  

\begin{rem} In the K\"ahler setting~\cite{DervanRoss,zakarias} the test configurations over $(X,[\omega])$ considered are more generally (compact) K\"ahler analytical spaces. Then the Donadson-Futaki invariant as \eqref{eqDFinv} is defined for a smooth resolution over it. It is proved then that the definition is independant of the resolution and that to test $K$-semistability (i.e that $\DF(\mX,[\metric]) \geq 0$ for all test configurations over $(X,[\omega])$) and even uniform K-stability (see eg.\cite{DervanRoss}) it is enough to consider smooth test configurations, see \cite[Proposition 2.23]{DervanRoss}.\end{rem}

\begin{rem} The definition of a test configuration and of its Donaldson--Futaki invariant is independant of the representative $\metric$ chosen in $[\metric]\in H^2_{dR}(\mX)$ and is usually denoted $\DF(\mX,[\metric])$. For the techniques of equivariant cohomology we apply in this paper it is convenient to pick a $S^1$--invariant representative in $[\metric]$ where we understand $S^1 \hookrightarrow \bC^*$ in the standard way. By abuse of notation we will often avoid the brackets. 
\end{rem}

\subsection{The Donaldson--Futaki invariant as an equivariant classes product}\label{ssectEqClassDF}
{\it From now on, without loss of generality, we pick a $S^1$-invariant K\"ahler metric $\metric$ on $\mX$.}

We denote $V:= \nu_*(\partial_\theta)$ the real holomorphic vector field on $\mX$ induced from the generator of $S^1$ via the action. By assumption this is also a Killing vector field with zeros and thus it is a Hamiltonian vector field on $(\mX,\metric)$. We pick a Hamiltonian function $\mom : \mX\ra \bR$ that is $$-d\mom=\metric(V,\cdot).$$   
 We get again two equivariantly closed forms $\mA_{\metric}:= (\metric- \mom)$ and $$ \mB_{\metric} = \frac{n\cst}{n+1}(\metric- \mom)- \left(\rho^\metric-\frac{1}{2}\Delta^\metric \mom\right) + (\pi^*\omega_{FS}- \pi^*\mom_{FS}) $$ where $\mom_{FS}$ is a Hamiltonian for the standard $S^1$ action on $(\bP^1,\omega_{FS})$ see Example~\ref{exampleP1}. Since the integration picks up only the $2(n+1)$--degree terms we have 
 \begin{equation}\label{eqEQUIVfut}
\DF(\mX,\metric) =  \, [\mA_{\metric}]^n\cup [\mB_{\metric}] \, ([\mX])
\end{equation} in equivariant cohomology. As in Remark~\ref{remarkINDEPclass}, the equivariant classes $[\mA_{\metric}]$, $[\mB_{\metric}] \in H_{S^1}^2(\mX)$ are independant of the chosen representative in $[\metric] \in H_{dR}^2(\mX,\bR)$.\\

Many times in this paper we will use the following facts and notation: since $\pi: \mX \ra \bP^1$ is equivariant the fixed points set of the $S^1$ action on $\mX$, $\mbox{Fix}_{S^1}\mX$, lies in $X_0\sqcup X_\infty$.  Following a classical argument of Riemannian geometry,  $\mbox{Fix}_{S^1}\mX$ is a disjoint union of smooth submanifolds, $\mbox{Fix}_{S^1}\mX=X_\infty\sqcup \left(\sqcup_l Z_l\right)$. One of the component is $X_\infty$ because of the condition (4) defining a test configuration. The remaining components lie in $X_0$. We denote then $$\mbox{Fix}_{S^1}\mX = (\sqcup Z)  \sqcup X_\infty$$ where $Z$ denotes a generic component of $\mbox{Fix}_{S^1}\mX$ in $X_0$.

\section{Localizing the Donaldson--Futaki invariant} \label{sectLOCDF}

Applying Theorem~\ref{extDHteo} we will prove the next proposition where the following notation is used: given any subset $S\subset \mX$ we denote the inclusion $\iota_S:S\ra \mX$ and, for any form/function/tensor.. $\alpha$ on $\mX$ the pull-back to $S$ is denoted $\alpha_S:= \iota^*_S\alpha$.

\begin{proposition}\label{theoLOCDFinvariant} Let $(\mX,[\metric],\nu,\pi, \biho)$  be a regular (compact) test configuration over $(X^n,[\omega])$. We pick $\metric\in [\metric]$ a $S^1$--invariant K\"ahler metric. Let $V=\nu_*(\partial \theta)$ be the vector field induced by the underlying $S^1$--action, $\mom :\mX\ra \bR$ be any Hamiltonian function for $V$. The Donaldson--Futaki invariant of  $(\mX,[\metric],\nu,\pi, \biho)$ is  
%      \begin{equation}\label{eqDFinvLOCf}\begin{split}
%  \frac{\DF(\mX,\metric)}{n!}  = &\sum_Z \int_Z \frac{n\mu_{[\omega]}(\metric_Z - \mom_Z)^{n+1}}{(n+1)! \euler(N_Z^\mX)(V)}\\
%   &- \int_Z\frac{(\rho^\metric_Z -f_Z)\wedge(\metric_Z - \mom_Z)^{n}}{n! \euler(N_Z^\mX)(V)}\\ 
%   &+ \int_Z\frac{(\metric_Z - \mom_Z)^{n}}{n! \euler(N_Z^\mX)(V)}  - 2\pi (f_{X_\infty}-1)\int_{X}\frac{\omega^n}{n!}. 
%   \end{split}
%   \end{equation} where $f :\mX\ra \bR$ be any smooth function such that $\rho^\metric(V,\cdot) -df$. In particular choosing $f= \frac{1}{2}\Delta^\metric \mom$, equation \eqref{eqDFinvLOCf} turns into     
       \begin{equation}\label{eqDFinvLOCintrolap}\begin{split}
 \frac{\DF(\mX,\metric)}{n!}  = &\sum_Z \int_Z \frac{n\cst_{[\omega]}(\metric_Z - \mom_Z)^{n+1}}{(n+1)! \euler(N_Z^\mX)(V)}\\
  &- \sum_Z\int_Z\frac{(\rho^\metric_Z +\sum_{i=0}^{n-n_Z}\langle\we_i,\partial \theta\rangle )\wedge(\metric_Z - \mom_Z)^{n}}{n! \euler(N_Z^\mX)(V)}\\ 
  &+ \sum_Z\int_Z\frac{(\metric_Z - \mom_Z)^{n}}{n! \euler(N_Z^\mX)(V)}.
  \end{split}
  \end{equation} where $Z$ runs into the set of connected components of the fixed points set of the $S^1$--action lying in the central fiber $X_0=\pi^{-1}(0)$. The equivariant vector bundle $N_Z^\mX$ over $Z$ is the normal bundle of $Z$ in $\mX$ and $\we_0^{Z},\dots,\we_{n-n_Z}^Z\in(\mbox{Lie } S^1)^*$ are the weights (with multiplicity) of the induced action of $S^1$ on $N_Z^\mX$.
 \end{proposition}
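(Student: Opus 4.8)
The plan is to feed the equivariantly closed integrand behind the equivariant-cohomological description of $\DF(\mX,\metric)$ from \S\ref{ssectEqClassDF} into the Localization Formula of Theorem~\ref{extDHteo}, and then to identify the contributions component by component. Set $\Phi:=(\mA_\metric)^n\wedge\mB_\metric$; this is equivariantly closed because $\mA_\metric$ and $\mB_\metric$ are, and by the identity \eqref{eqEQUIVfut} of \S\ref{ssectEqClassDF} its integral over $\mX$ (the degree $2(n+1)$ part) equals $\DF(\mX,\metric)$. Since the acting group is $S^1$, the generator $a=\partial_\theta$ is automatically generic, so Theorem~\ref{extDHteo} gives
\begin{equation*}
\DF(\mX,\metric)=\int_\mX\Phi=\sum_Z\int_Z\frac{\iota_Z^*\Phi}{\euler(N_Z^\mX)(V)}+\int_{X_\infty}\frac{\iota_{X_\infty}^*\Phi}{\euler(N_{X_\infty}^\mX)(V)},
\end{equation*}
the sum running over the components $Z$ of $\mbox{Fix}_{S^1}\mX$ lying in $X_0$, together with the distinguished component $X_\infty$ identified in \S\ref{ssectEqClassDF}. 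After dividing by $n!$, the proposition reduces to computing the $Z$-summands and to showing that the $X_\infty$-summand vanishes.

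For each $Z\subset X_0$ I would restrict $\mA_\metric$ and $\mB_\metric$. Since $\mom$ is constant on a fixed component, $\iota_Z^*\mA_\metric=\metric_Z-\mom_Z$. For $\mB_\metric$ two inputs enter: first, \eqref{eq:LAPweightsFIXEDpoints} identifies $\tfrac12\Delta^\metric\mom$ on $Z$ with $-\sum_{i=0}^{n-n_Z}\langle\we_i,\partial_\theta\rangle$, the sum of the normal weights; second, because $Z\subset X_0=\pi^{-1}(0)$ the map $\pi$ is constant on $Z$, so $\iota_Z^*\pi^*\omega_{FS}=0$ while $\iota_Z^*\pi^*\mom_{FS}=\mom_{FS}(0)=-1$ by Example~\ref{exampleP1}. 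Hence
\begin{equation*}
\iota_Z^*\mB_\metric=\frac{n\cst}{n+1}(\metric_Z-\mom_Z)-\Big(\rho^\metric_Z+\sum_{i=0}^{n-n_Z}\langle\we_i,\partial_\theta\rangle\Big)+1.
\end{equation*}
Wedging with $(\metric_Z-\mom_Z)^n$, dividing by $n!$ and distributing the three summands over $\euler(N_Z^\mX)(V)$ (the $Z$-integration selecting the appropriate degree) reproduces verbatim the three lines of \eqref{eqDFinvLOCintrolap}; this is routine binomial and degree bookkeeping and presents no difficulty.

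The heart of the argument, and the step I expect to be the main obstacle, is the vanishing of the $X_\infty$-contribution. Near $X_\infty$ the biholomorphism $\biho$ of condition (4) identifies $\mX$ with a product $X\times(\bP^1\setminus\{0\})$ carrying the trivial action on $X$ and the standard action on $\bP^1$; consequently $N_{X_\infty}^\mX$ is trivial and its single normal weight equals the weight $-1$ of the standard action on $T_\infty\bP^1$. Thus $\euler(N_{X_\infty}^\mX)(V)$ is a nonzero constant, and the same two inputs now read $\tfrac12\Delta^\metric\mom=+1$ and $\iota_{X_\infty}^*\pi^*\mom_{FS}=\mom_{FS}(\infty)=+1$ on $X_\infty$, so the $+1$ and $-1$ cancel and
\begin{equation*}
\iota_{X_\infty}^*\mB_\metric=\frac{n\cst}{n+1}(\metric_\infty-\mom_\infty)-\rho^\metric_\infty.
\end{equation*}
It then remains to see that $\int_{X_\infty}(\metric_\infty-\mom_\infty)^n\wedge\iota_{X_\infty}^*\mB_\metric=0$. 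Extracting the degree-$2n$ part leaves the purely cohomological pairing $n\mom_\infty\big(\int_{X_\infty}\metric_\infty^{n-1}\wedge\rho^\metric_\infty-\cst\int_{X_\infty}\metric_\infty^{\,n}\big)$. Here I would use that $[\metric_\infty]=[\omega]$ by \eqref{eqCDTcohomoTC} and that $[\rho^\metric_\infty]=2\pi c_1(X)$ — the latter since $c_1(N_{X_\infty}^\mX)=0$ forces $c_1(\mX)|_{X_\infty}=c_1(X)$ — so that $\int_{X_\infty}\metric_\infty^{n-1}\wedge\rho^\metric_\infty=\cst\int_{X_\infty}\metric_\infty^{\,n}$ exactly by the definition of $\cst=\cst_{[\omega]}$ in \eqref{eqDFinv}. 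The bracket therefore vanishes, the $X_\infty$-term drops out, and the surviving sum over $Z$ is precisely \eqref{eqDFinvLOCintrolap}.
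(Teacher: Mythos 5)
Your proposal is correct and follows essentially the same route as the paper: apply the Atiyah--Bott/Berline--Vergne formula to $(\mA_\metric)^n\wedge\mB_\metric$, evaluate the $Z$-contributions using $\iota_Z^*\pi^*\omega_{FS}=0$, $\mom_{FS}(0)=-1$ and the Laplacian/weight identity \eqref{eq:LAPweightsFIXEDpoints}, and kill the $X_\infty$-term via the trivial weight-$(-1)$ normal bundle, the cancellation $\tfrac12\Delta^\metric\mom=\mom_{FS}(\infty)=1$, and the identity $\cst_{[\omega]}=\cst(X_\infty,[\metric_\infty])$ coming from $[\metric_\infty]=[\omega]$ and $[\iota_{X_\infty}^*\rho^\metric]=[\rho^{\metric_\infty}]$. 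The only cosmetic difference is that you justify $[\rho^\metric_\infty]=2\pi c_1(X_\infty)$ via $c_1(N_{X_\infty}^\mX)=0$ while the paper invokes the equivariant splitting $\iota_\infty^*T\mX=TX_\infty\oplus\pi^*T_\infty\bP^1$, which amounts to the same thing.
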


 \begin{rem} A more concise way to express the last claim using the notation introduced in\S\ref{ssectEqClassDF} is the following 
 $$\DF(\mX,\metric) = \sum_Z \int_Z  \frac{\iota_Z^*\mA_{\metric}^n\wedge \iota_Z^*\mB_{\metric,o}}{\euler(N_Z^\mX)(V)}$$
 where $\mB_{\metric,o} = \frac{n\cst_{[\omega]}}{n+1}\mA_{\metric} - (\rho^\metric +\sum_{i=0}^{n-n_Z}\langle\we_i,\partial \theta\rangle) + 1$ and $\iota_Z: Z\hookrightarrow \mX$ is the inclusion.  
 \end{rem}

\begin{proof}
 
We use the notation of the last paragraph and the observation and notation lay down in the last subsection. In particular, the normal bundle of $X_{\infty}$ in $\mX$, denoted $N^\infty$, is trivial and the weight of the induced action is $-1$. Consequently the Euler equivariant form (with the normalisation used here~\eqref{eq:equivEULERsplit}) is $$\euler(N^{X_\infty}_V)= - (2\pi c_1(N^{X_\infty}) - \langle \we^{\infty},\partial \theta  \rangle)/2\pi =-1/2\pi.$$

Now, as explained in~\S\ref{ssectEqClassDF}, see formula~\eqref{eqEQUIVfut} we can write the Donaldson--Futaki invariant as the integral of an equivariantly closed form. We apply Theorem \ref{extDHteo}, using $\euler(N^{X_\infty}_V)=1$ and writting $f=\frac{1}{2}\Delta^\metric \mom$, we get  
 \begin{equation*}\begin{split}
 \frac{\DF(\mX,\metric)}{n!} & = \sum_Z  \int_Z \frac{n\cst_{[\omega]}(\metric_Z - \mom_Z)^{n+1}}{(n+1)! \euler(N_Z^\mX)_V} - \frac{(\rho^\metric_Z -f_Z)\wedge(\metric_Z - \mom_Z)^{n}}{n! \euler(N_Z^\mX)_V} \\
 & \qquad \qquad+  \int_Z\frac{\iota_Z^*(\pi^*\rho^{FS} - \pi^*\mom_{FS})\wedge(\metric_Z - \mom_Z)^{n}}{n! \euler(N_Z^\mX)_V}\\
 & -2\pi \int_{X_\infty}  \frac{n\cst_{[\omega]}(\metric_\infty -\mom_\infty)^{n+1}}{(n+1)!} - \frac{(\iota_\infty^*\rho^\metric -f_{X_\infty} )\wedge(\metric_\infty - \mom_\infty)^{n}}{n!}\\
  & \qquad \qquad -2\pi \int_{X_\infty}\frac{\iota_\infty^*(\pi^*\rho^{FS} - \pi^*\mom_{FS})\wedge(\metric_\infty - \mom_\infty)^{n}}{n!}
 \end{split}
 \end{equation*} where $\cst_{[\omega]}= \cst(X,[\omega])$. We will see that the integral over $X_\infty$ vanishes up to one term. Note that $\iota_\infty^*\pi^*\rho^{FS}=0$ and on $X_\infty$, $\iota_\infty^*(\Delta^{\metric}\mom) = -2\langle \we^{\infty}, \partial \theta \rangle =2$ and $\mom_{FS}(\infty)=1$ see Example~\ref{exampleP1}. The splitting $\iota_\infty^*T\mX = TX_\infty \oplus\pi^*T_\infty\bP^1$ implies that $[\iota_\infty^*\rho^\metric]=2\pi c_ 1(X_\infty)= [\rho^{\metric_\infty}]$ in cohomology. The integral over $X_\infty$ is then 
  \begin{equation}\label{eqintXinfty}\begin{split}
 &\int_{X_\infty}  \frac{-n\cst_{[\omega]} \mom_\infty\metric_\infty^n}{n!} - \frac{(\rho^{\metric_\infty} - f_\infty)\wedge(\metric_\infty - \mom_\infty)^{n}}{n!} - \frac{(\metric_\infty - \mom_\infty)^{n}}{n!}\\
 &=\int_{X_\infty}  \frac{-n\cst_{[\omega]} \mom_\infty\metric_\infty^n}{n!} - \frac{\rho^{\metric_\infty}\wedge(\metric_\infty - \mom_\infty)^{n}}{n!} +(f_\infty -1)\frac{(\metric_\infty - \mom_\infty)^{n}}{n!}\\
  &= \int_{X_\infty} \frac{-n\cst_{[\omega]} \mom_\infty\metric_\infty^n}{n!} + \frac{n \mom_\infty \rho^{\metric_\infty}\wedge \metric_\infty^{n-1}}{n!} +(f_\infty -1)\frac{(\metric_\infty - \mom_\infty)^{n}}{n!}.
 \end{split}
 \end{equation} The first two terms cancel each others because $\cst_{[\omega]}= \cst(X,[\omega])=\cst(X_\infty,[\metric_\infty])$. 
 
  The remaining components $Z$ of the fixed points set lies in $X_0=\pi^{-1}(0)$ so $\iota_Z^*\pi^*\rho^{FS} =0$ and $\iota_Z^*\pi^*\mom_{FS} =-1$. Moreover since $f_{\infty}=1$ and $f_Z = -\sum_{i=0}^{n-n_Z}\langle\we_i,\partial \theta\rangle$, we end up with \eqref{eqDFinvLOCintrolap}. \end{proof}

  \subsubsection{Singular central fiber}\label{subsectionSINGcf}
  Proposition~\ref{theoLOCDFinvariant} gives the $\DF$ invariant as an intersection number on compact submanifolds lying in the possible singular central fiber. In the next two subsections we show that using back the equivariant localization formula~\eqref{formDHext} the right hand side of \eqref{eqDFinvLOCintrolap} is nothing but the Futaki invariant of (the induced $S^1$ action on) the central fiber under some strong conditions ensuring this last invariant to be defined as it is classically~\cite{futaki} and recalled in \S\ref{secFUTAKI}. As one can check in the subsequent section, the issue is to relate the equivariant Euler forms $\euler(N^\mX_Z)$ and $\euler(N^{X_0}_Z)$. To get a valid statement when $X_0$ is singular one needs to refer to equivariant cohomology for singular variety. This theory has been developed in details by Edidin and Graham in the context of algebraic spaces and schemes, in terms of cycles and equivariant Chow groups~\cite{EG_intersectionTHEORY}. They also proved a localization formula~\cite{EG} which fits well in our setting, namely the singular variety has to be embedded in a smooth one. This embedding allows them to use the normal bundle as we get in the right hand side of \eqref{eqDFinvLOCintrolap}. Therefore, starting with a test configuration bearing an algebraic structure (eg. polarized scheme see \cite{don:scalar}) and up to translating statements about equivariant forms into statement involving equivariant cycles, we can apply directly Edidin-Graham result~\cite[Proposition 6]{EG} to state that in this case $$\DF(\mX,[\metric]) = \iota_0^*[\mA_{\metric}]^n\cup \iota_0^*[\mB_{\metric}]([X_0]).$$  Indeed, as an equivariant ($S^1$--stable) subvariety $X_0 \subset \mX$ the forms $\iota_0^*\mA_{\metric}$ and $\iota_0^*\mB_{\metric}$ are also equivariantly closed forms on $X_0$ (seen as a K\"ahler space). Moreover, in anycase (even when $X_0$ is singular), Proposition~\ref{theoLOCDFinvariant} tells us that the contribution to $\DF$ coming from $X_\infty$ is trivial.

\subsection{Smooth central fiber}

\begin{theorem}\label{theoSMOOTHcentralFiber} Let $(\mX,[\metric],\nu,\pi)$ be a regular (compact and smooth) test configuration and let $X_0$ be the central fiber on which the $S^1$ action induced by $\nu$ gives the vector field $V=\nu_*\partial_\theta \in\Gamma(TX_0)$. Assume that the central fiber $X_0$ is a smooth submanifold of $\mX$, then \begin{equation}\label{FUT=DFsmooth}\frac{\DF(\mX,[\metric])}{n!} =- \pi \mF_{[\metric_0]}(J_0V)\end{equation} where $(\metric_0,J_0)$ denotes the K\"ahler structure of $X_0$ induced from $\mX$.       
\end{theorem}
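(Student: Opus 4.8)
The plan is to take the localized formula \eqref{eqDFinvLOCintrolap} of Proposition~\ref{theoLOCDFinvariant} as the starting point and to identify its right-hand side with $-\pi$ times the localized Futaki invariant \eqref{eq:FUTinvLOCALIZED} of the central fibre $(X_0,[\metric_0])$. Both expressions are already sums over the \emph{same} components $Z$ of $\mathrm{Fix}_{S^1}$ lying in $X_0$, built from the same restricted data $\metric_Z$ and $\mom_Z$; so the entire argument reduces to matching, component by component, the two pieces of localizing data attached to each $Z$: the equivariant Euler class of the normal bundle and the equivariant Ricci term.

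The geometric heart of the matter is that $X_0$ is a smooth complex hypersurface of $\mX$, so along $Z$ the normal bundle splits $S^1$-equivariantly as $N_Z^\mX = N_Z^{X_0}\oplus\bigl(N_{X_0}^{\mX}\bigr)|_Z$. Because $X_0=\pi^{-1}(0)$ is a smooth (reduced) fibre, $N_{X_0}^{\mX}\cong\pi^*T_0\bP^1$ is topologically trivial and carries the weight-$1$ action of Example~\ref{exampleP1}; by the normalization \eqref{eq:equivEULERsplit} its equivariant Euler form is the constant $\tfrac{1}{2\pi}$, whence
\[
\euler(N_Z^\mX)(V)=\tfrac{1}{2\pi}\,\euler(N_Z^{X_0})(V).
\]
This is the source of the factor of $\pi$ in \eqref{FUT=DFsmooth}. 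The same splitting, read through adjunction $\iota_0^*T\mX = TX_0\oplus N_{X_0}^{\mX}$, has two further effects along $Z$: the list of normal weights of $Z$ in $\mX$ is the list of its normal weights in $X_0$ together with the single extra weight $1$, and, since $c_1(N_{X_0}^{\mX})=0$, the class of $\iota_Z^*\rho^\metric$ equals that of the central-fibre Ricci form $\rho^{\metric_0}$ restricted to $Z$. Consequently the bracket $\rho^\metric_Z+\sum_{i=0}^{n-n_Z}\langle\we_i,\partial\theta\rangle$ in \eqref{eqDFinvLOCintrolap} equals the corresponding equivariant Ricci term of $X_0$ plus the constant $1$.

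This extra constant is exactly what closes the computation. Wedged with $(\metric_Z-\mom_Z)^n$ and divided by $n!\,\euler(N_Z^\mX)(V)$, the $1$ produces precisely $-\sum_Z\int_Z(\metric_Z-\mom_Z)^n/\bigl(n!\,\euler(N_Z^\mX)(V)\bigr)$, which cancels the third sum of \eqref{eqDFinvLOCintrolap}. After this cancellation I would substitute $\euler(N_Z^\mX)(V)=\tfrac{1}{2\pi}\euler(N_Z^{X_0})(V)$ in the two surviving sums; term by term they become $-\pi$ times the summands of the localized Futaki invariant \eqref{eq:FUTinvLOCALIZED} written for $(X_0,[\metric_0])$ and $J_0V$. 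Re-summing by Theorem~\ref{extDHteo} applied on $X_0$ then gives $-\pi\,\mF_{[\metric_0]}(J_0V)$, which is \eqref{FUT=DFsmooth}; the $X_\infty$ contribution plays no role, having already been discarded in Proposition~\ref{theoLOCDFinvariant}.

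The step I expect to be the main obstacle is precisely the comparison of the two Euler classes together with the careful accounting of signs and powers of $2\pi$. One must verify that $N_{X_0}^{\mX}$ genuinely carries weight $+1$ and trivial first Chern class (i.e.\ that $\pi$ is a submersion along the reduced smooth fibre, so that no extra multiplicity enters), check that the spurious constant term cancels \emph{exactly}, and confirm that the residual factor of $2\pi$ combines with the factor $2$ and the normalization built into the localized Futaki invariant \eqref{eq:FUTinvLOCALIZED} to yield the precise coefficient $-\pi$ rather than some other multiple.
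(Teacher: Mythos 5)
Your overall strategy is the paper's: start from \eqref{eqDFinvLOCintrolap}, split $N_Z^\mX=N_Z^{X_0}\oplus\iota_Z^*N_{X_0}^\mX$ equivariantly, show the second factor is topologically trivial with weight $+1$ so that its equivariant Euler form is the constant $\pm 1/2\pi$ and the extra ``$+1$'' in the weight sum cancels the third line, then relocalize on $X_0$. The genuine gap is at the step you yourself flag as the main obstacle, and which you dispose of by writing that ``$X_0$ is a smooth (reduced) fibre, so $N_{X_0}^{\mX}\cong\pi^*T_0\bP^1$''. That identification requires $\pi$ to be a submersion along $X_0$, which is \emph{not} among the hypotheses: the theorem only assumes that the set $\pi^{-1}(0)$ is a smooth submanifold, and the paper explicitly notes (citing Arezzo--Della Vedova--La Nave and Tian) that smooth central fibers along which $\pi$ fails to be a submersion do occur. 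The paper's proof spends most of its length establishing precisely the facts you assume: it gets $\langle\we_0,\partial_\theta\rangle=\pm1$ from equivariance of $\pi\circ\exp$ together with freeness of the $S^1$--action on $\bP^1\setminus\{0,\infty\}$, pins down the sign $+1$ by comparing the gradients of $\mom$ and $\pi^*\mom_{FS}$ along the normal slice (via Schwarz's theorem on invariant functions), and proves topological triviality of $N_{X_0}^\mX$ by constructing a nowhere-vanishing section from the nearby fibers $X_t$ using the slice theorem. Without an argument of this kind, your conditions (weight $+1$, $c_1(N_{X_0}^\mX)=0$, hence $\iota_Z^*[\rho^\metric]=\iota_Z^*[\rho^{\metric_0}]$ and $\cst_{[\metric_0]}=\cst_{[\omega]}$) are unproved in the stated generality.

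A second, smaller issue: the ``term by term'' identification with \eqref{eq:FUTinvLOCALIZED} does not hold for the first sums. After substituting $\euler(N_Z^\mX)=\pm\euler(N_Z^{X_0})/2\pi$, the $(\metric_Z-\mom_Z)^{n+1}$--sum of \eqref{eqDFinvLOCintrolap} picks up a factor $2\pi$ relative to the first line of \eqref{eq:FUTinvLOCALIZED}, whereas the Ricci sums pick up $\pi$ relative to the second line (because of the factor $-2$ there); so the right-hand side is not a single multiple of $\mF_{[\metric_0]}(J_0V)$ unless the first sums vanish. This forces the normalization $\int_{X_0}\mom_0\,\metric_0^n=0$, which the paper imposes at the outset of its proof and which you never state; since both $\DF$ and $\mF$ are independent of the additive constant in the Hamiltonian, this costs nothing, but it must be said. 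With that normalization added and the weight/triviality step actually proved, your argument closes and the bookkeeping of signs and powers of $2\pi$ is the only remaining (and, as you note, delicate) verification.
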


\begin{proof} We use the notation introduced in \S\ref{secEQUIVcohomology} and denote $(\metric,J, \cG)$ a $S^1$--invariant K\"ahler structure on $\mX$. We will apply Proposition~\ref{theoLOCDFinvariant}, more precisely formula \eqref{eqDFinvLOCintrolap}, with a Hamiltonian function $\mom :\mX\ra \bR$ for $V=\nu_*(\partial\theta)$ normalized by \begin{equation}\label{normMOM0}\int_{X_0} \mom_0\, \metric_0^n =0 \end{equation} where, as usual, we denote $\iota_{X_0}^*\mom=\mom_0$.
   
Fix a connected component $Z \subset \mbox{Fix}_{\mX} S^1$ lying in $X_0$. The sequence of $S^1$--equivariant embeddings $Z\subset X_0 \subset \mX$ gives a short exact sequence of vector bundles over $Z$ 
$$0\longrightarrow N^{X_0}_Z \longrightarrow  N^\mX_{Z} \longrightarrow  N^\mX_{X_0} \longrightarrow 0,$$ which, in turn, gives $$\euler(N^\mX_{Z}) = \euler(N^{X_0}_Z )\cup \iota_Z^*\euler(N^\mX_{X_0}).$$ 
We have a decomposition into $\cG$--orthogonal $S^1$--equivariant bundles
\begin{equation}\label{decompPOINTsmooth}
\iota^*_0T\mX =  TX_0 \oplus E_0\end{equation} where $E_0\simeq N^\mX_{X_0}$ equivariantly. We put $E_0^Z:=\iota_Z^*E_0$. Observe that this line bundle corresponds to one summand $(E_0^Z)_z$ in the equivariant decomposition of $T_z \mX$ for any $z\in Z$. We will show that the weight associated to this summand is $1$. 

%induced by the isometric and Hamiltonian $S^1$-action. This decomposition is $\cG$--orthogonal, $\metric$--orthogonal and $J$--invariant as explained for example in \cite[\S IV.1.b]{Audin}. This holds also for the induced action and K\"ahler structure on the central fiber and the sequence of $S^1$--equivariant embedding $Z\subset X_0 \subset \mX$ respects these decompositions. Therefore $E^\mX_Z$ splits into two parts $$E^\mX_Z = E^{X_0}_Z \oplus E_0^Z$$ where $E_0^Z \subset T\mX$ is an equivariant line bundle complementa 
% 
% and thus a short sequence of vector bundles over $Z$
% $$0\longrightarrow \iota_Z^*TX_0\longrightarrow \iota_Z^*T\mX\longrightarrow N^\mX_{X_0}\longrightarrow  0.$$ 
% %$$0\longrightarrow N^{X_0}_Z \longrightarrow  N^\mX_{Z} \longrightarrow  N^\mX_{X_0} \longrightarrow 0.$$ 
% Therefore one of the summand, say $E_0^Z$, in the decomposition~\eqref{decompPOINTsmooth} is sent onto $N^\mX_{X_0}$ throught the quotient by $TX_0$ while the remaining, that is $TZ \oplus (\oplus_{j=1}^{\ell_Z} E_{j}^Z)$, coincides with $\iota^*_ZTX_0$.

Pick $\mathcal{A} \subset E_0^Z$ a $S^1$--invariant tubular neighbourhood of the zero section in $E_0^Z\ra Z$. Since $X_0$ is smooth and $Z$ compact we may assume that the restriction of the Riemannian exponential map $\exp : \mathcal{A} \ra \mX$ is injective on $\mathcal{A}$. Moreover, each fiber $\mathcal{A}_z := \mathcal{A}\cap T_z\mX$ is sent to a submanifold tranversal to $X_0$. Taking a smaller tubular neighbourhood if necessary, the map $\pi \circ \exp$ is $S^1$--equivariant and its image contains some $t\in\bP^1\backslash\{0,\infty\}$. From this we deduce easily that $\langle\we_0,\partial_\theta\rangle =\pm 1$.

The pull back $(\pi\circ\exp_z)^*(\mom_{FS})$ is a smooth $S^1$--invariant function along the fiber $\mathcal{A}_z$. Thus, by a classical result of Schwarz~\cite{Schwarz} it is a smooth function of the fiberwise squared norm induced by $G$ on $E_0^Z$, which coincides, up to the weight and an additive constant with $\exp^*\mom$, see~\cite{GS} and \cite[\S IV.1.b]{Audin}. Now, since $G(\nabla^G \mom, \nabla^G \pi^*(\mom_{FS})) = \pi^*g_{FS}(V,V)>0$ on $\pi^{-1}(\bP^1\backslash\{0,\infty\})$ we have that $(\pi\circ\exp_z)^*(\mom_{FS})$ is an increasing function of $\exp^*\mom$ along each fiber of $E_0^Z$. Therefore the weight of the $S^1$-action on $E_0^Z$ is positive and thus $\langle\we_0,\partial_\theta\rangle = 1$.

A key observation is that, because $\pi$ is equivariant, $\bP^1\backslash \{0,\infty\}$ are all regular values of $\pi$ and then the normal bundles $N_t:=N_{X_t}^{\mX}$ of $X_t$ in $\mX$ are all trivial. This implies that on the regular part of $X_0$ (which is the whole of $X_0$ under the hypothesis here) the normal bundle $E_0\simeq N^\mX_{X_0} \ra X_0$ is, at least topologically, trivial. Indeed on the regular part any open set $U\subset \mX$, $U\cap (X_0)_{reg}$ can be approximated smoothly by $U\cap (X_t)$. Another way to be convinced of this is to build a section of $N_{(X_0)_{reg}}^\mX$ as follow. For a point $x\in (X_0)_{reg}$ an equivariant neighbourhood $U_x\subset \mX$ of the orbit $S^1\cdot x =\{\nu(\lambda)x\,|\, \lambda \in S^1\}$ can be described via the symplectic slice Theorem, see~\cite{GS,LT}. So that, as Hamiltonian $S^1$--space, $U_x$ is isomorphic to an open equivariant neighbourhood of the zero section in $S^1 \times_{S^1_x} (\bR \times V_x)$ or $T_xS^1$ depending if $S^1_x$ is finite or not and where $V_x = (T(S^1\cdot x))^{\perp_\Omega} /(T(S^1\cdot x)^{\perp_\Omega}\cap TS^1\cdot x)$. Therefore the level sets of $exp_x^*|\pi|^2$ (an increasing function of $exp_x^*\mom_{FS}$) and of $exp_x^*\mom$ can be compared in a neighbourhood of $0\in T_xN_{(X_0)_{reg}}$ as we did above, to conclude that for $t\in \bP^1$ with $|t|$ small enough there is only one single point in $\exp_x(U_x\cap (N_{(X_0)_{reg}})_x) \cap X_t$. With this we can build the desirable section $\sigma_t \in \Gamma((X_0)_{reg},N_{(X_0)_{reg}}^\mX)$.

Putting all that together we have:     
\begin{itemize}
 \item[a) ] $\iota_{Z}^*[\rho^\metric] =  \iota_{Z}^*[\rho^{\metric_0}]\in H_{dR}^2(Z)$;
 \item[b) ] $\langle\we^Z_0,\partial \theta\rangle = 1$; 
 \item[c) ] $\euler(N_Z^\mX) = - \euler(N_Z^{X_0})/2\pi$.
\end{itemize} 

Condition c) allows us to consider the terms of \eqref{eqDFinvLOCintrolap} as integrals of equivariantly closed forms on $X_0$ using the Localization Formula~\eqref{extDHteo}. Indeed the first line of \eqref{eqDFinvLOCintrolap} is the integral of the equivalently closed form $2\pi (\metric_0 - \mom_0)^{n+1}$ on $X_0$. Then the first line of \eqref{eqDFinvLOCintrolap} is canceled by the normalization \eqref{normMOM0}. 

Thanks to condition b) the middle term in the Formula~\eqref{eqDFinvLOCintrolap} is canceled with the last term (the third line). We are left with \begin{equation*}
 \frac{\DF(\mX,\metric)}{n!}  =-\sum_Z\int_Z\frac{(\rho^\metric_Z +\sum_{i=1}^{n-n_Z}\langle\we^Z_i,\partial \theta\rangle )\wedge(\metric_Z - \mom_Z)^{n}}{n!(- \euler(N_Z^{X_0}))(V)}.
  \end{equation*} Condition a) together with the fact that the integrant above are constant or closed forms on $Z$ gives that
   \begin{equation*}
 \frac{\DF(\mX,\metric)}{n!}  =\sum_Z\int_Z\frac{(\iota_Z^*\rho^{\metric_0} +\sum_{i=1}^{n-n_Z}\langle\we^Z_i,\partial \theta\rangle )\wedge(\metric_Z - \mom_Z)^{n}}{n! \euler(N_Z^{X_0})(V)}.
  \end{equation*} Now, we have $(\Delta^{\metric_0}\mom_0)_z  = -2\sum_{i=1}^{n-n_Z}\langle\we_i,\partial \theta\rangle $ for any $z\in Z$ and thus \begin{equation*}
 \frac{\DF(\mX,\metric)}{n!}  =\sum_Z\int_Z\frac{(\iota_Z^*\rho^{\metric_0} -\frac{1}{2}\iota_Z^*(\Delta^{\metric_0}\mom_0) )\wedge(\metric_Z - \mom_Z)^{n}}{n! \euler(N_Z^{X_0})(V)}
  \end{equation*} where $\rho^{\metric_0} -\frac{1}{2}(\Delta^{\metric_0}\mom_0)$ is an equivariantly closed form on $X_0$. This allows us to use the Localization formula again and we get \begin{equation*}
 \frac{\DF(\mX,\metric)}{n!}  =-2\pi \int_{X_0}\frac{(\rho^{\metric_0} -\frac{1}{2} \Delta^{\metric_0}\mom_0)\wedge(\metric_0 - \mom_0)^{n}}{n!}= -\pi \mF_{[\metric_0]}(J_0V)
  \end{equation*} thanks to the Formula~\eqref{eqEQUIVfut} and using \eqref{normMOM0} again. \end{proof}

\subsubsection{Product test configurations} \label{subsecPRODUCT} When a K\"ahler manifold $(X,\omega)$ admits a holomorphic action $\nu: \bC^*\hookrightarrow \mbox{Aut}(X)$ restricting to a Hamiltonian isometric $S^1$--action $\nu: S^1\hookrightarrow \mbox{Aut}(X,\omega)$, then one can build a test configuration $(\mX_\nu,\metric_\nu)$ where the total space $\mX_\nu$ is obtained as the {\it clutching construction} from $X$ and the action $\nu$. One way to define $(\mX_\nu,\metric_\nu )$ is via a K\"ahler reduction, see \cite{BHJ} for the projective case. 

First consider the K\"ahler manifold $(X\times \bC^2,\omega + \omega_{std})$ with a $\bC^*$ action whose induced $S^1$ action admits the moment map $$\tilde{\mom}(x,z,w) = \mom(x) - \frac{1}{2}(|z|^2+|w|^2)$$ where $\mom: X\ra \bR$ is the moment map picked for the action on $(X,\omega)$. Note that any $c<\min \mom$ is a regular value of $\tilde{\mom}$ and the $S^1$ action on $\tilde{\mom}^{-1}(c)$ is free. Therefore $\mX_{\nu}=\tilde{\mom}^{-1}(c)/S^1$ is a manifold which inherits of a K\"ahler structure $(J_\nu,\metric_{\nu,c})$, see eg. Futaki\cite{kirwan,Fu3}, so that the map $\pi([x,z,w]) = [z:w]$ from $\mX_{\nu}$ to $\bP^1$ is holomorphic and equivariant. The underlying complex manifold is independant of the choice of $c$ chosen into a connected set of regular values of $\tilde{\mom}$ thanks to \cite{kirwan}. The K\"ahler class $[\metric_{\nu,c}]$ depends on the value $c \in (-\infty, \min \mom)$ but in any case $\iota_t^* [\metric_{\nu,c}]=[\omega]$ as one can show following the ideas in~\cite{McDuffTolman}. Then $(\mX_\nu,\metric_\nu,\pi,\nu)$ satisfies the conditions of a test configuration enumerated in \S\ref{subsectTCDF}. Such a test configuration satisfies the hypothesis of Theorem~\ref{theoSMOOTHcentralFiber} and thus we have $\DF(\mX_\nu,\metric_\nu) =- n!\pi \mF_{\metric}(J\nu_*(\partial\theta)).$
%  
%  . Indeed, via the diffeomorphism $f_c:\tilde{\mom}^{-1}(c) \ra X\times S^3$, $$f_c(x,z,w) := \left(x, \frac{1}{\sqrt{2(\mom(x)-c)}}(z,w)\right)$$ the basic and closed form $\beta_c:=\omega - d(\mom -c)\eta_{std}$, where $\eta_{std}$ is the standard contact form on $S^3$, is pulled back to the restriction of $\omega + \omega_{std}$. Considering the $S^1$ action on $X\times S^3$ making $f_c$ equivariant and the quotient $q:X\times S^3\ra \mX_\nu$. The form $q_*\beta_c$ restricts to $\omega$ along the fibers of $\pi$ since the second term of $\beta_c$ vanishes identically on the fibers of $\pi\circ q$.  
%  
%  It is shown in~\cite{McDuffTolman} that $(\mX_\nu,q_*\beta_c)$ coincides with the clutching construction. 

\subsection{Orbifold central fiber}\label{sectOrbiCF}

\subsubsection{The equivariant geometry of the orbifold central fiber}\label{sectOrbiCF1}
Assume now that the central fiber $X_0$ of the (smooth, compact) test configuration $(\mX,\metric,\pi,\nu)$ is not smooth but inherits from $(\mX,\metric)$ of a K\"ahler orbifold structure. That means that for each singular point $x\in X_0$ there is an open set $U\subset \mX$ such that $\metric_0 = \iota_{X_0}^*\metric$ defines a distance on the set $U\cap X_0$, which is isometric with an orbifold metric on some $\tilde{U}/\Gamma$ where $\Gamma \subset \mbox{GL}(\bC^n)$ is finite and $\tilde{U}$ is a $\Gamma$ invariant open subset of $\bC^n$ see eg \cite{LT,RT}. In particular, each connected component of the singular set has codimension at least $2$ in $X_0$ and $X_0$ is irreductible as a complex space.  

Moreover, the Futaki invariant of the induced $\bC^*$ action on $(X_0,\metric_0 = \iota_{X_0}^*\metric)$ makes sense, see \cite{DingTian} and \S\ref{secFUTAKI}, and coincides with \eqref{eq:FUTinvLOCALIZEDorbifold}.

We first give some consequences of the assumptions above. Each connected component $Z$ of the $S^1$--fixed point set is a smooth K\"ahler submanifold of $(\mX,\metric)$ as recalled in Section~\ref{secEQUIVcohomology}.  A connected (smooth) compact metric manifold cannot be isometric to a metric orbifold with more than one type of isotropy group, see~\cite[Lemma 2.1]{LangeOrbifoldMetric}. Therefore, $\Gamma_z\simeq \Gamma_Z$ for all $z\in Z$. Moreover, $Z$ is compact as the connected component of the zero set of a smooth vector field on a compact manifold $\mX$, and thus $Z$ is covered by a finite number of open sets of the form $U\cap Z$ for open set $U\subset \mX$ such that $U\cap X_0$ satisfies the orbifold condition recalled above and that $\quot_U^{-1}(z) \in \widetilde{U}$ is a single point for all $z\in Z$. The subset $\quot_U^{-1}(Z\cap U)$ is fixed by $\Gamma_Z$, smooth in $\tilde{U}$, and mapped homeomorphically onto $Z\cap U$ via $\quot_U$. The collection of the uniformizing charts $\widetilde{U}$ together with the transition maps condition define a differential manifold $\widetilde{U}_Z$ with an action of the abstract group $\Gamma_Z$ so that $Z\subset \widetilde{U}_Z$ is a smooth submanifold fixed by $\Gamma_Z$ and there is a homeomorphism $\quot_Z:\widetilde{U}_Z/ \Gamma_Z \ra \cup U\cap X_0$. That is, there is a neighbourhood of $Z$ inside $X_0$ which is the global quotient of a manifold by a finite group $\Gamma_Z$ acting freely on $\widetilde{U}_Z \backslash Z$ and fixing $Z$. We sum up the discussion in the following claim.

\begin{lemma}
 Let $(\mX,\metric,\nu,\pi)$ be a regular (compact and smooth) test configuration and let $(X_0,\metric_0)$ be the central fiber. Assume that $X_0$ inherits of a structure of metric orbifold from $(\mX,\metric)$ then each connected component of the singular locus either coincide with a component of the fixed points set or is disjoint of the fixed points sets. 
\end{lemma}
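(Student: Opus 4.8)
The statement is a dichotomy for each connected component $\Sigma_0$ of the singular locus $\mathrm{Sing}(X_0)$: either $\Sigma_0$ misses $\mbox{Fix}_{S^1}\mX$ entirely, in which case there is nothing to prove, or it does not. Since $\mathrm{Sing}(X_0)\subset X_0$ and $X_0\cap X_\infty=\emptyset$, any such intersection takes place at one of the fixed components $Z\subset X_0$, so the plan is to show that whenever $\Sigma_0\cap\mbox{Fix}_{S^1}\mX\neq\emptyset$ the component $\Sigma_0$ coincides with that $Z$. Throughout I use that the singular locus of the orbifold $X_0$ is exactly the set of points with nontrivial isotropy group, and that by the discussion above (via \cite{LangeOrbifoldMetric}) the isotropy is constant, $\Gamma_z\simeq\Gamma_Z$, along any fixed component $Z$.

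First I would fix such a $\Sigma_0$ and a point $x_0\in\Sigma_0\cap Z$. As $x_0$ is singular, $\Gamma_{x_0}$ is nontrivial, and constancy of the isotropy type along $Z$ forces every point of $Z$ to be singular; hence $Z\subset\mathrm{Sing}(X_0)$. Since $Z$ is connected and meets the component $\Sigma_0$ at $x_0$, this gives $Z\subset\Sigma_0$.

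Next I would invoke the local model assembled just above: a neighbourhood of $Z$ inside $X_0$ is the global quotient $\widetilde{U}_Z/\Gamma_Z$ with $\Gamma_Z$ acting freely on $\widetilde{U}_Z\setminus Z$ and fixing $Z$. In such a uniformizing chart the points of nontrivial isotropy are precisely the image of the $\Gamma_Z$-fixed set, namely $Z$ itself, so there is an open neighbourhood $W$ of $Z$ in $X_0$ with $W\cap\mathrm{Sing}(X_0)=Z$. Combined with $Z\subset\Sigma_0\subset\mathrm{Sing}(X_0)$ this yields $W\cap\Sigma_0=Z$, so $Z$ is open in $\Sigma_0$; being a connected component of the closed set $\mbox{Fix}_{S^1}\mX$, it is also closed in $\Sigma_0$. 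Connectedness of $\Sigma_0$ then forces $\Sigma_0=Z$, as desired.

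The real content has already been extracted in the discussion preceding the statement: the constancy of the isotropy type along each $Z$ and, above all, the local description of $X_0$ near $Z$ as $\widetilde{U}_Z/\Gamma_Z$ with $\Gamma_Z$ acting freely off $Z$. Granting these, the lemma reduces to the purely topological observation that $Z$ is simultaneously open and closed in $\Sigma_0$. The step requiring the most care is the assertion that no singular points of $X_0$ accumulate onto $Z$ beyond those of $Z$ itself, which is exactly what the freeness of the $\Gamma_Z$-action on $\widetilde{U}_Z\setminus Z$ guarantees.
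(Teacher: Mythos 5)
Your argument is correct and follows essentially the same route as the paper: the lemma there is explicitly offered as a summary of the immediately preceding discussion, whose two ingredients --- constancy of the isotropy type along each fixed component $Z$ via Lange's lemma, and the global uniformizing chart $\widetilde{U}_Z/\Gamma_Z$ with $\Gamma_Z$ fixing $Z$ and acting freely on $\widetilde{U}_Z\setminus Z$ --- are exactly the facts you invoke. Your open-and-closed-in-the-connected-component packaging of the last step simply makes explicit the topological deduction the paper leaves implicit, and you rightly identify the freeness of the action off $Z$ as the point carrying the real weight.
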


The normal bundle of $Z$ in $\widetilde{U}_Z$ inherits of $\Gamma_Z$ action, $\tilde{\bm} : N_Z^{\widetilde{U}} \ra Z$ and the normal orbibundle of $Z$ in $X_0$ is the quotient $N_Z^{\widetilde{U}}/\Gamma_Z$. We denote $\bm : N_Z^{X_0} \ra Z$, the orbibundle map.

These open sets $U\subset \mX$, covering $Z$, can be chosen equivariant with respect to the $S^1$--action since $Z$ is fixed by $S^1$ and thus $U\cap X_0$ are also equivariant. Then, following~\cite{LT}, there is a group extension $\Gamma_Z \hookrightarrow H_U \stackrel{q}{\twoheadrightarrow} S^1$ and an action $\tilde{\nu} : H_U\hookrightarrow \mbox{Isom} (\tilde{U},\tilde{g})$ covering the initial one via $\quot_U$. Since $\Gamma_z$ is constant on $Z$, the extension is also constant, say $H_U\simeq H_Z$, and we get an isometric (and biholomorphic) action of $H_Z$ defined globally on the manifold $\widetilde{U}_Z$. There are two possibilities: either $H_Z$ is a product of $S^1$ with a finite group ($\Gamma_Z$ or a quotient of it) or $H_Z\simeq S^1$ if $H_Z$ is connected. The example of Ding--Tian~\cite{DingTian} falls into the first category and any toric examples would give the second~\cite{LT}. In either case the connected component to the identity is isomorphic to $S^1$ and the normal bundle $N^{\widetilde{U}_Z}_Z$ is then a $S^1$--equivariant bundle over $Z$ with weights $\w_1,\dots, \w_{\ell_Z}\in (\mbox{Lie} S^1)^*\simeq \bR$ (the weights might not lie in the (same) weights lattice of our original representation of $S^1$ but it is not a problem in terms of localization formula~\cite{Meinrenken}).

\subsubsection{The Donaldson-Fuaki invariant and the Futaki invariant of the orbifold central fiber}\label{sectOrbiCF2}

\begin{theorem}\label{theoORBIFOLDcentralFiber} Let $(\mX,\metric,\nu,\pi)$ be a regular (compact and smooth) test configuration and let $(X_0,\metric_0)$ be the central fiber on which the $S^1$ action induced by $\nu$ gives the vector field $V=\nu_*\partial_\theta$. Assume that $X_0$ inherits of a structure of metric orbifold from $(\mX,\metric)$. Then \begin{equation}\label{FUT=DFsmooth}\frac{\DF(\mX,\metric)}{n!} =- \pi \mF_{[\metric_0]}(J_0V)\end{equation} where $(\metric_0,J_0)$ denotes the K\"ahler structure of $X_0$ induced from $\mX$.       
\end{theorem}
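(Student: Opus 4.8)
The plan is to follow the same strategy as the proof of Theorem~\ref{theoSMOOTHcentralFiber}, applying Proposition~\ref{theoLOCDFinvariant} to express $\DF(\mX,\metric)/n!$ as a sum of integrals over the fixed-point components $Z$ lying in $X_0$, and then reassembling this sum into the orbifold-localized Futaki invariant~\eqref{eq:FUTinvLOCALIZEDorbifold}. The essential geometric input has already been prepared in \S\ref{sectOrbiCF1}: near each component $Z$ the central fiber is the global quotient $\widetilde{U}_Z/\Gamma_Z$ of a smooth manifold by the finite group $\Gamma_Z$, and the extension $\Gamma_Z\hookrightarrow H_Z\twoheadrightarrow S^1$ provides an isometric $S^1$-action on $\widetilde{U}_Z$ with $Z$ as its (smooth) fixed-point set. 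Thus the orbifold setting is, chart by chart, a smooth $S^1$-equivariant setting, and I would argue at the level of the uniformizing charts throughout, invoking the orbifold localization principle~\cite{Meinrenken} to descend to $X_0$.

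First I would normalize the Hamiltonian $\mom$ by $\int_{X_0}\mom_0\,\metric_0^n=0$ exactly as in \eqref{normMOM0}; here the integral over the orbifold $X_0$ is understood through the uniformizing charts. Next, for each fixed component $Z$ I would establish the three analogues of conditions a), b), c) from the smooth proof. For c), the short exact sequence $0\to N_Z^{\widetilde{U}_Z}\to \iota_Z^*T\mX\to N^\mX_{X_0}\to 0$ of $S^1$-equivariant bundles over $Z$ (the normal bundle of $Z$ in the uniformizing chart being smooth) gives the multiplicativity $\euler(N_Z^\mX)=\euler(N_Z^{\widetilde{U}_Z})\cup\iota_Z^*\euler(N^\mX_{X_0})$, and I expect $\euler(N^\mX_{X_0}(V))=-1/2\pi$ to persist because the transversal weight is again $\pm 1$. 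For b), the argument that the weight on the transversal line $E_0^Z$ equals $+1$ should go through verbatim: lifting $\pi\circ\exp$ to the chart $\widetilde{U}_Z$, the function $\pi^*\mom_{FS}$ remains an increasing function of $\mom$ along the fibers since $G(\nabla^G\mom,\nabla^G\pi^*\mom_{FS})>0$ off the fixed set, so $\langle\we_0,\partial\theta\rangle=1$. For a), the splitting $\iota_Z^*T\mX=TX_0\oplus E_0$ over the chart identifies $\iota_Z^*[\rho^\metric]$ with $\iota_Z^*[\rho^{\metric_0}]$ in $H^2_{dR}(Z)$.

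With these three conditions in hand the computation proceeds exactly as before: the first line of \eqref{eqDFinvLOCintrolap} integrates the equivariantly closed form $2\pi(\metric_0-\mom_0)^{n+1}$ over $X_0$ and vanishes by the normalization; condition b) cancels the middle line against the third; and condition a) together with $(\Delta^{\metric_0}\mom_0)_z=-2\sum_i\langle\we_i,\partial\theta\rangle$ rewrites the survivor as the integral over the components $Z$ of the equivariantly closed form $(\rho^{\metric_0}-\tfrac12\Delta^{\metric_0}\mom_0)\wedge(\metric_0-\mom_0)^n$ divided by $\euler(N_Z^{X_0})(V)$. Applying the orbifold localization formula~\eqref{formDHext}/\eqref{eq:FUTinvLOCALIZEDorbifold} in reverse then yields $-2\pi\int_{X_0}(\rho^{\metric_0}-\tfrac12\Delta^{\metric_0}\mom_0)\wedge(\metric_0-\mom_0)^n/n!=-\pi\mF_{[\metric_0]}(J_0V)$.

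The main obstacle is bookkeeping the orbifold factors $d_Z$ correctly when passing between $\euler(N_Z^\mX)$, $\euler(N_Z^{X_0})$, and the chart-level $\euler(N_Z^{\widetilde{U}_Z})$. The subtlety is that $Z$ itself may be an orbifold locus of $X_0$ (by the Lemma, $Z$ either coincides with or is disjoint from each singular component), so the isotropy order $d_Z$ enters both the orbifold localization formula on $X_0$ and the comparison of Euler classes. I expect these factors to match up precisely because the same extension $H_Z$ governs both the reverse localization on $X_0$ and the multiplicativity of Euler classes over the charts, so that $d_Z$ appears symmetrically on both sides; verifying this cancellation carefully—ensuring the weights $\w_j$ used in $\euler(N_Z^{X_0})$ are exactly those appearing in \eqref{eq:FUTinvLOCALIZEDorbifold}—is the delicate point that distinguishes this proof from the smooth case.
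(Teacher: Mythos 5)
Your overall strategy is the same as the paper's: reduce to three conditions a), b), c) on each fixed component $Z$, rerun the smooth-case computation, and let the orbifold localization formula \eqref{eq:FUTinvLOCALIZEDorbifold} absorb the isotropy orders $d_Z$. The reduction and the final reassembly are fine. The gap is that the two places where the orbifold structure actually bites --- your versions of b) and c) --- are handled by bundle-theoretic shortcuts that are not available here, and these are exactly the steps the paper has to work for. For c), you invoke a short exact sequence $0\to N_Z^{\widetilde{U}_Z}\to\iota_Z^*T\mX\to N_{X_0}^{\mX}\to0$ and multiplicativity of equivariant Euler classes. But $N_{X_0}^{\mX}$ is not a vector bundle when $X_0$ is singular along $Z$, and, more seriously, the map from the chart-level normal bundle into $\iota_Z^*T\mX$ induced by the uniformizing map $\quot_U$ is not a bundle injection: in the local model $\bC\to\bC/\Gamma\simeq\bC$, $w\mapsto w^{d}$, the differential vanishes along the fixed locus. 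This is why the paper writes only $N_Z^{X_0}\stackrel{d\quot}{\longrightarrow}N_Z^{\mX}\to N_{X_0}^{\mX}\to0$ as a heuristic and explicitly declines to argue that way; instead it constructs an explicit $S^1$--equivariant map $\Psi$ from a tubular neighbourhood of the zero section of $N_Z^{X_0}\times\underline{\bC}$ to one of $N_Z^{\mX}$ and checks that $\frac{1}{d_Z}\Psi^*(\tau(N_Z^{\mX}))$ is an equivariant Thom form, using $\int_{U_0}\alpha=\frac{1}{d_Z}\int_{\quot_U^{-1}(U_0)}\quot_U^*\alpha$. The factor $d_Z$ in c) comes precisely from this comparison; your ``I expect these factors to match up'' defers the only nontrivial point.

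For b), you assert that the smooth argument giving $\langle\we_0,\partial\theta\rangle=1$ on a transversal line $E_0^Z$ goes through verbatim. It does not: the splitting $\iota_Z^*T\mX=TX_0\oplus E_0$ presupposes that $TX_0$ is a bundle along $Z$, and in the orbifold case the weights $\we_i$ of $S^1$ on $N_Z^{\mX}$ need not coincide with the chart-level weights $\w_i$ of $N_Z^{\widetilde{U}_Z}$, since the latter are weights of the extension $H_Z$ and may live in a rescaled lattice. What the cancellation between the second and third lines of \eqref{eqDFinvLOCintrolap} actually requires is the identity between sums $\sum_{i=0}^{\ell_Z}\langle\we_i,\partial\theta\rangle=1+\sum_{i=1}^{\ell_Z}\langle\w_i,\partial\theta\rangle$, and the paper proves this by building the map $\psi_z(w,\lambda)=\exp_z^{-1}\circ\quot_U\circ\widetilde{\exp_z}(w)+d_z\nu(\lambda)v$ and comparing the Laplacians of the Hamiltonians at the origin. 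Without these two constructions your text is a correct outline of the smooth case transplanted into orbifold notation rather than a proof of the orbifold theorem.
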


% \begin{rem}
%  The condition $crit(\pi ) \subset \mbox{Fix}_{S^1} \mX$ implies that $(X_0)_{sing} \subset \mbox{Fix}_{S^1} \mX$.
% \end{rem}

 Ding and Tian produced examples of such test configurations in the polarized setting~\cite{DingTian}. 

\begin{proof} Observe from the proof of Theorem~\ref{theoSMOOTHcentralFiber} that it is suffisant to show that these three following conditions hold on each connected component $Z$ of the fixed point set:
\begin{itemize}
 \item[a) ] $\iota_{Z}^*[\rho^\metric] =  \iota_{Z}^*[\rho^{\metric_0}]\in H_{dR}^2(Z)$;
 \item[b) ] $\sum_{i=0}^{\ell_Z}\langle\we^Z_i,\partial \theta\rangle = 1+ \sum_{i=1}^{\ell_Z}\langle\w^Z_i,\partial \theta\rangle$; 
 \item[c) ] $\euler(N_Z^\mX) = -d_Z \euler(N_Z^{X_0})/2\pi$;
\end{itemize} 
here $d_Z$ is the order of the isotropy group of a generic point in $Z$, $N_Z^{X_0}$ is the $S^1$--equivariant bundle over $Z$ constructed above and weights $\w_1,\dots \w_{\ell_Z}$.

We fix a connected component $Z\subset X_0\cap(\mbox{Fix} S^1)$ for the rest of the discussion and denote $\ell_Z$ its complex codimension in $\mX$. 

A key observation is that, because $\pi$ is equivariant, $\bP^1\backslash \{0,\infty\}$ are all regular values of $\pi$. Therefore the normal bundles $N_t:=N_{X_t}^{\mX}$ of $X_t$ in $\mX$ are all trivial. This implies that $\iota_{X_t}^*[\rho^\metric] = \iota_{X_t}^*[\rho^{\metric_t}]$ by the Adjunction formula. Then $\rho^{\metric_t}\longrightarrow \rho^{\metric_0}$ since it holds on the regular part and do not blow-up along singular locus thanks to the orbifold condition. Hence, we get condition a). 

Pick a point $z\in Z$ and consider the decomposition 
\begin{equation}\label{decompPOINT}
T_z\mX =  T_zZ \oplus (\oplus_{j=0}^{\ell_Z} E_{j,z})\end{equation} induced by the isometric and Hamiltonian $S^1$-action. This decomposition is $\cG$--orthogonal, $\metric$--orthogonal and $J$--invariant as explained for example in \cite[\S IV.1.b]{Audin}. Also the induced action of $S^1$ is Hamiltonian and isometric and provides a vector field $W_z \in \Gamma(TT_z\mX)$ with Hamiltonian function $H\in C^\infty(T_z\mX)$ whose Laplacian at $0$ is $$-\frac{1}{2} \sum_{j=0}^{\ell_Z} \we_j$$ which is equivalent to Formula~\eqref{eq:LAPweightsFIXEDpoints}. Let consider the Riemannian exponential map $\exp_z: T_z\mX \longrightarrow \mX$ associated to $\cG$. This map is $S^1$ equivariant and is a diffeomorphism onto its image when restricted to an open $S^1$-equivariant neighbourhood, say $\mB$, of $0\in T_z\mX$. Pick a point $v\in \mB$ such that $\exp_z v \notin X_0$, such point exists by an argument involving the dimension of $X_0\cap U \backslash (Z\cap U)$. Since $\exp_z$ and $\pi$ are $S^1$-equivariant and $\pi(\exp_z v)\neq 0$, the stabilizor of $\exp_z v$ in $S^1$ must be trivial. Comparing the gradients of $H=\exp_z^*\mom$ and $\exp_z^*|\pi|^2$ one can argue, as in the proof of Theorem~\ref{theoSMOOTHcentralFiber}, that the punctured disk $\bD^*= \{\lambda \in \bC^*\,|\, |\lambda|\leq 1\}$ is sent into $\mB$ via the map $$\lambda \mapsto d_z\nu(\lambda)v  \in T_z\mX$$ and $d_z\nu(\lambda)v\longrightarrow 0$ when $\lambda \ra 0 \in \bC$.              

%In paticular, if $v$ lies in one the summands $E_j$ of \eqref{decompPOINT} we have $\we_j=1$ and any summand with weight distinct than $1$ is sent into $X_0$ via the Riemannian exponential map. Another consequence of this basic observation is that considering the punctured disk $\bD^*= \{\lambda \in \bC^*\,|\, |\lambda|\leq 1\}$, the map $$\lambda \mapsto d_z\nu(\lambda)v  \in T_z\mX$$ has image in $\mB$ and $d_z\nu(\lambda)v\longrightarrow 0$ when $\lambda \ra 0 \in \bC$.          

Taking $\widetilde{U}_Z$ the uniformizing chart of $Z$ with the notation in\S\ref{sectOrbiCF1}, the $S^1$-equivariant decomposition of $$T_z\widetilde{U}=T_zZ \oplus (\oplus_{j=1}^{\ell_Z} \widetilde{E}_{j,z})$$ also behaves nicely with respect to the K\"ahler structure $(\tilde{\metric_z},\tilde{G_z},\tilde{J})$ induced by the (orbifold) map $\iota_0\circ\quot_Z : \widetilde{U} \ra U\subset  \mX$. We take an $S^1$-equivariant neighbourhood $\tilde{\mB}$ of $0\in T_z\widetilde{U}$ such that the Riemannian exponential map $\widetilde{\exp_z} : \tilde{\mB} \ra \widetilde{U}$ is a diffeomorphism over its image and consider the map $$\psi_z(w,\lambda) = \exp_z^{-1}\circ\quot_U\circ\widetilde{\exp_z}(w) + d_z\nu(\lambda)v \in T_z\mX.$$ Up to taking a smaller $S^1$-equivariant set $\tilde{\mB}$ this map is well defined and continous on $\tilde{\mB} \times \bD^*$ and extend smoothly on $\tilde{\mB} \times \bD$. First, observe that $\psi_z$ is a local diffeomorphism when restricted to $(\tilde{\mB}\backslash (T_zZ\cap \tilde{\mB}) \times \bD^*$ therefore the image is open. Also $\psi_z: \tilde{\mB} \times \bD \ra T_z\mX$ is $S^1$-equivariant for the action $\tilde{\nu}$ times the standard action on $\bD$. This is the crucial feature of this map. We can consider the $S^1$--invariant metric structure induced by $\psi_z$ on $(\tilde{\mB}\backslash (T_zZ\cap \tilde{\mB})) \times \bD^*$, this coincides with the previous one on $\tilde{\mB}$ times the standard structure on $\bD^* \subset \bC$ (we don't mean here that the Riemannian exponential is an isometry but we use it as a $S^1$--equivariant diffeomorphism to identify subsets of the bundles and the fact that $\exp_z\circ d_z\quot_U $ should coindice with $\widetilde{\exp_z}$ by definition of the metric structure on $\widetilde{U}$). Therefore $\psi_z^*H$ is also the Hamiltonian function for the Killing and Hamiltonian vector field induced by the $S^1$-action on $T_z\tilde{U}\times \bC$ and $$\Delta^{\tilde{G_z} + \omega_{std}}(\psi_z^*H)_{(0,0)} = (\Delta^{G_z }H)_0$$ which means, using Formula~\eqref{eq:LAPweightsFIXEDpoints}, that we get condition b) above.                                 

It remains to prove that condition c) holds with the hypothesis of the Theorem. One idea would be to argue that even if the normal and tangent bundles of $X_0$ are not well defined as bundles, they exist as sheaves and therefore the Adjunction formula together with a) gives $c_1(N_{X_0}^{\mX})=0$ and the weight of the $S^1$ action on $N_{X_0}^{\mX}$ is $1$. Then the extension of the equivariant cohomology theory to sheaves should ensure that the sequence of $S^1$-equivariant maps $$N_Z^{X_0} \stackrel{d\quot}{\longrightarrow} N_Z^{\mX}\longrightarrow N_{X_0}^{\mX}\longrightarrow 0$$ implies that $\euler(N_Z^\mX) = d_Z\euler(N_{X_0}^{\mX}) \euler(N_Z^{X_0})$ and therefore we get condition c). Another, more pedestrian, approach is to define a $S^1$--equivariant map, say $\Psi$, from a tubular neighbourhood, $\widetilde{N}\times \underline{\bD}$ of the zero section of $\tilde{p} : N_Z^{X_0}\times \underline{\bC} \ra Z$ to $\mN$, a tubular neighbourhood of the zero section of $p:N_Z^{\mX}\ra Z$. To be useful here this map should pull back the (compactly supported) equivariant Thom form of $N_Z^{\mX}$ to $d_Z$ times the one of $N_Z^{X_0}\times \underline{\bC}$. Indeed, recall that an equivariant Thom form $\tau(E)$ of a rank $2r$ equivariant bundle $E\ra Z$ is any equivariantly closed form that integrates to $(2\pi)^r$ over each fiber, see eg. \cite{BerlineGetzlerVergne,GSbook}. There are compactly supported version of these forms. An important feature of a Thom form (which all lie in the same cohomology class of course) is that it pulls back to $Z$, seen as the zero section, to give a representative of the equivariant Euler class of the bundle. 
Therefore it is sufficiant to construct a map $\Psi$ with domain and target described above such that $\frac{1}{d_Z}\Psi^*(\tau(N_Z^{\mX}))$ satisfies the properties of a Thom form and $\Psi(Z) = Z$. 
We now construct this map. First consider the bundle $L=N_{(X_0)_{reg}}^\mX\longrightarrow U\backslash Z$ over $U\backslash Z$ which we identify with a subbundle of $T(U\backslash Z)$ using the metric. This bundle is trivial as explained before. We pick a $S^1$--invariant nowhere vanishing section $\sigma \in \Gamma(U\backslash Z, L)$ such that $\sigma_x \ra 0$ when $x\ra Z$. The set $U$ is a tubular neighbourhood of $Z$ in $\mX$ and $L$ is a trivial bundle on $U\backslash Z$ so it is clear that such a section exists. We can take $\sigma$ lying into the subset where the exponential map is injective. Then we define 
$$\Psi(w,\lambda) := \exp_{\tilde{p}(w)}^{-1}\circ\quot_U\circ\widetilde{\exp_{\tilde{p}(w)}}(w) + d_{\tilde{p}(w)}\nu(\lambda)\exp_{\tilde{p}(w)}^{-1}\exp_x(\sigma_x)$$
where $x=\quot_U\circ\widetilde{\exp_{\tilde{p}(w)}}(w)\in X_0$
%\exp_{\tilde{p}(w)}  \quot_U(\widetilde{\exp}_z v)}(\lambda\sigma_{\quot_U(\widetilde{\exp}_z v)})??????????????????????????$$ where $z=\tilde{p}(v)$.

This map is continuous, $S^1$ equivariant, local diffeomorphism away from $Z$ (seen as a zero section of $N_Z^{X_0}$) and is surjective. Combined with the fact that $\int_{U_0} \alpha= \frac{1}{d_Z}\int_{\quot_U^{-1}(U_0)}\quot_U^*\alpha$ for any form $\alpha \in \Omega^*(U_0)$ on any open set $U_0\subset U\cap X_0$ we check easily that $\frac{1}{d_Z}\Psi^*(\tau(N_Z^{\mX}))$ is a Thom form on $N_Z^{X_0}\times \underline{\bC}$ and that $\Psi(Z)=Z$. \end{proof}
% 
% \begin{rem} 
%  The condition $crit(\pi ) \subset \mbox{Fix}_{S^1} \mX$ could be removed from the hypothesis of the Theorem, and the proof simplified, if one can prove that for each componant $Z$, one of the summand $E_j^Z$ is not sent into $X_0$ throught the exponential. 
% \end{rem}
% 

\section{Deformation to the normal cone}\label{s:DFnormalCone}
\subsection{Deformation to the normal cone as a test configuration} 

\subsubsection{Construction and fixed points set}
Let $Y^m\subset X^n$ be a smooth compact connected subvariety of complex codimension $k=n-m$. We consider the blow-up of $X\times \bP^1$ along $Y\times\{0\}$ and call it $\mX_Y$ or $\mX$. As explained in \cite{RTslope}, see also~\cite{DervanRoss} for the adaptation to compact non-projective test configurations, this gives a test configuration over $(X,[\omega])$ for a suitable K\"ahler class $[\mA]$ on $\mX$. We now recall various aspects of this test configuration, called the {\it deformation to the normal cone}. 

We denote the blow-down map $\b: \mX\longrightarrow X\times \bP^1$ and $\pi=\mbox{pr}_2\circ \b : \mX\ra \bP^1$ the test configuration surjective map. The $\bC^*$--action on $\mX$ is the only one that covers the standard one on $\bP^1$ via $\pi$, we call it $\phi: \bC^*\hookrightarrow \mbox{Aut}(\mX)$. The $S^1$--action given via $\phi$ gives Killing vector field $\phi_*(\partial\theta)=V$ on $(\mX,\metric)$ and we pick a Hamiltonian $\mom:\mX \ra \bR$. 

The central fiber of $\pi$ is \begin{equation}\label{centralFIBERdefNormCONE}\pi^{-1}(0) = b^{-1}(X\backslash Y \times \{0\})\sqcup \mE\end{equation} where $\mE=b^{-1}(Y\times \{0\})\subset \mX$ is the exceptional divisor. Recall that $\mE \simeq \bP(N_{Y\times\{0\}}^\mX)=\bP(N_{Y}^X\oplus \pi^*T_0\bP^1)$ where $N_Y^X=TX/TY$ (respectively $N_Y^{\mX}=T\mX/TY$) is the normal bundle of $Y$ in $X$ (respectively in $\mX$). Moreover, the closure of $b^{-1}(X\backslash Y \times \{0\})$ in $\mX$ will be denoted $\bu$, we have $\bu \simeq \mbox{Bl}_Y(X)$ the blow-up of $X$ along $Y$. We denote the blow-down map $\check{\b} : \bu \ra X$ and $\check{\b}^{-1}(Y)=\excep\simeq \bP(N_{Y}^X)$ the exceptional divisor. Of course, $\mE \cap \bu =\excep$ in $\mX$.

% \begin{rem}\label{remACTIONbu} Let $U\subset X$ be an open set on which $Y\cap U$ is the common zero set of holomorphic functions $f_1 ,\dots, f_k : U\ra \bC$. The definition of the blow-up of $Y\times \{0\} \subset X\times \bP^1$ gives the chart on $\b^{-1}(U\times D)$ as $$ \tilde{U}=\left\{([z],x,t) \in \bP^k\times U\times D \, |\, z_j f_i(x) = z_i f_j(x),\;\; z_0 f_i(x)= z_i t\right\}$$ where $D\subset \bP^1$ is some open set containing $0$. Therefore the action of $\bC^*$ on $\tilde{U}$ induced by $\phi$ is $(\lambda, ([z],x,t))\mapsto ([\lambda z_0: z_1:\cdots : z_k],x,\lambda t).$ In this chart, the central fiber is characterized by $t=0$, its submanifold $\excep$ by adding the condition $z_0=0$. 
% \end{rem}

In the regular test configuration $(\mX,[\metric_s], \pi,\phi)$, the $S^1$--invariant metric $\metric_s$ can be chosen to satisfy the requirements of a test configuration, see \cite{DervanRoss,RTslope} as well as \begin{equation} 
\metric_{s,\mE}:= \iota_\mE^*\metric_s  =b^*\iota_Y^*\omega -s \delta_{\mE} \end{equation} where $\delta_{\mE}$ is the curvature of an $S^1$--invariant hermitian bundle metric on $\mO_\mE(-1) \ra \mE$ and $s >0$ is some positive constant see eg.~\cite{voisin}.\\

\subsubsection{Equivariant Euler classes of the normal bundles} \label{subsecNORMALdeformCONE}

The fixed points set of the action of $\phi: \bC^*\hookrightarrow \mbox{Aut}(\mX)$ is $$X_\infty\sqcup Z_0\sqcup \bu$$ where $Z_0=\bP(0\oplus T_0\bP^1)\simeq Y$ and $\bu\simeq \mbox{Bl}_Y(X)$ are defined above. 

\begin{notation}\label{notationBUNDLELblowup}
It is well-known, see eg.~\cite[Lemmes 3.25, 3.26]{voisin}, that on any blow-up there exists a holomorphic line bundle, unique up to isomorphism, trivial outside the  exceptional divisor and whose restriction to the exceptional divisor is the tautological line bundle which agrees with the normal bundle. That is there is a line bundle $\mL\ra \mX$ over the blow-up $\mX= \mbox{Bl}_{Y\times\{0\}}(X\times \bP^1)$ satisfying $\mL_{|_{\mX\backslash \mE}} \simeq \underline{\bC}$ and $\mL_{|_{\mE}} \simeq \mO_{\mE}(-1)\simeq N^\mX_\mE$. It is straightforward to check that the restriction of that bundle $L := \mL_{|_{\bu}}$ satisfies $L_{|_{\bu\backslash \excep}} \simeq \underline{\bC}$ and $L_{|_{\excep}} \simeq \mO_{\excep}(-1)\simeq  N_{\excep}^{\bu}$.   
\end{notation}

Observe that $Z_0=\bP(0\oplus T_0\bP^1)$ is a smooth submanifold of $\mE$ which is itself a smooth divisor in $\mX$, closed for the action of $S^1$ induced by $\phi$. Therefore the normal bundle of $Z_0=\bP(0\oplus T_0\bP^1)$ in $\mX$ splits equivariantly into \begin{equation}\label{NORM_Z_0splits}N_{Z_0}^\mX = \iota_{Z_0}^*N_{\mE}^\mX \oplus N_{Z_0}^\mE\end{equation} where $N_{\mE}^\mX\simeq \mO_{\mE}(-1)$. By definition the action $\phi$ on $\mX$ gives the multiplication on the second factor of the bundle $N_{Y}^X\oplus \pi^{*}T_0\bP^1$ with weight $1$ (see Example~\ref{exampleP1}). Moreover, $\iota_{Z_0}^*\mO_{\mE}(-1)$ is a trivial bundle over $Z_0$. Therefore the equivariant Euler forms satisfy 
\begin{equation}\label{eqEULERformZ0} \euler(N_{Z_0}^\mX)(V) = - \euler(N_{Z_0}^\mE)(V)/2\pi.\end{equation}

The set $\excep=\bP(N_Y^X\oplus 0)$ is a smooth divisor in $\mE$ and a connected component of the fixed points set of the $S^1$--action induced by $\phi$ on $\mE$. One can check, by restricting to the fibers $\excep_y \simeq \bP^{k-1}\subset\bP^{k} \simeq \mE_y$, that the normal bundle $N_{\excep}^\mE$ is isomorphic to $\imath^*_{\excep}\mO_\mE(1)=\mO_{\excep}(1)$. 

%The weight induced by the action here is still $-1$. Indeed, as recalled in Remark~\ref{remACTIONbu} the local equation of $\excep$ in $\mE$ is $z_0=0$ whose differential provides local charts for the conormal bundle \cite{voisin}. Therefore the induced action on the conormal is $1$.

%Thus, $$\euler(N_{\excep}^\mE)(V) =  2\pi c_1(N_{\excep}^\mE)+1.$$

The set $\excep$ is also a smooth submanifold of $\bu$ which is itself a (smooth) divisor in $\mX$, as a connected component of the fixed point set of holomorphic action. We get an equivariant splitting $N_\excep^\mX \simeq N_\excep^\bu \oplus \imath^*_\excep N_\bu^\mX$. The induced $S^1$--action on $N_\excep^\bu$ is trivial because $\bu$ is fixed (so the weight is $0$) and on $\imath^*_\excep N_\bu^\mX$ the weight has to be $1$ because this is what it is along $\bu\backslash \excep$ where $\pi$ is a submersion. There is an equivariant inclusion of bundles $N_{\excep}^\mE \hookrightarrow N_\excep^\mX$ and since the set of weights of the equivariant decomposition of $N_\excep^\mX$ is well defined up to re-ordering we have that the weight of the $S^1$--action is $1$.        

Therefore, denoting $\delta  = 2\pi c_1(\mL)$ the class mentioned above, so $$\delta_\mE:=\imath^*_\mE \delta = 2\pi c_1(\mO_{\mE}(-1))\, \mbox{ and }\,\delta_\excep:=\imath^*_\excep \delta = 2\pi c_1(\mO_{\excep}(-1)),$$ we deduce that
\begin{equation}\label{eqEulerclassesDEFORM}
\euler(N_\excep^\mE) = (\delta_\excep +1)/2\pi\, \mbox{ and }\, \euler(N_\bu^\mX) = (\delta_\bu +1)/2\pi. 
\end{equation}

% 
% 
% The weight of the $S^1$--action induced by $\phi$ on $N_{\bu}^{\mX}$ is $1$ as one can see on $\bu\backslash \excep$. Therefore, the equivariant Euler class is $$\euler(N_{\bu}^\mX)(V) = 2\pi c_1(N_{\bu}^\mX) -1.$$ %where $\classO_{\bu}\in  2\pi c_1(N_{\bu}^\mX) \in H^2(\bu,\bZ)$.

%We denote the Chern class of this bundle $\classO = 2\pi c_1(\mO_\mE(-1)) \in H^2(\mE,\bZ)$ and thus $\classO_{\excep} = \imath^*_{\excep}\classO = 2\pi c_1(N_{\excep}^\mE)$.
% 
% Using the Localization, see Theorem~\ref{extDHteo}, on $\mE= \bP(N_Y^X\oplus \bC)$ with the fiberwise $S^1$--action on the second factor of $N_Y^X\oplus \underline{\bC} \longrightarrow Y$ giving some vector field $V\in\Gamma(T\mE)$ then any equivariant closed form $\biho \in \Omega^*_{S^1}(M)$ satisfies \begin{equation}\label{formLOCmE}
%  \int_{\mE} \biho_{\partial_\theta} =  (-2\pi)^{k}\int_{Z_0} \frac{\iota_{Z_0}^*\biho}{\euler(N_{Z_0}^\mE)(V)} + (-2\pi)\int_{\excep} \frac{\iota_{\excep}^*\biho}{\euler(N_{\excep}^\mE)(V)}.
% \end{equation} 
% 

\subsection{The Donaldson--Futaki invariant of the deformation to the normal cone}

We use Formula~\eqref{eqDFinvLOCintrolap} to compute the Donaldson--Futaki invariant of the deformation to the normal cone of $Y\subset X$ as described above.

 \begin{theorem}\label{theoDEFORM_NC}  Let $(\mX,[\metric],\phi,\pi)$ be a regular (compact and smooth) test configuration over $(X,[\omega])$ obtained by the blow-up of $X\times \bP^1$ along $Y\times \{0\}$, a connected compact smooth subvariety of $X$. Then for any function $\mom :\mX \ra \bR$, Hamiltonian for the $S^1$--action induced by $\phi$ and normalized by the condition 
 \begin{equation}\label{formLOCmEnormMOMdeformCONE}
  \int_{\mE}\iota_\mE^*(\mom \metric^{n})=0.
 \end{equation} we have  
   \begin{equation}\label{eqDFdeformCONE}\begin{split}
  \frac{\DF(\mX,\metric)}{n!} = & -2\pi \mF_{\metric_{\mE}}(JV) \\
  & + 2\pi (-n\mom_\bu) (\cst_{[\omega]} - \cst_{[\metric_\bu]}) \int_\bu\metric_\bu^n + 4\pi \int_{\bu}   \frac{(\metric_\bu-\mu_\bu)^n\wedge \delta^2_\bu}{\delta_\bu +1} 
 \end{split}
 \end{equation} where $\cst_{[\metric_\bu]} = \int_\bu\rho^{\metric_\bu}\wedge \metric_\bu^{n-1} / \int_\bu\metric_\bu^n$ and $\delta_\bu \in 2\pi\iota^*_\bu c_1(\mL)$ (see Notation~\ref{notationBUNDLELblowup}).
 \end{theorem}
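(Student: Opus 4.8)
The plan is to apply Proposition~\ref{theoLOCDFinvariant}, which expresses $\DF(\mX,\metric)/n!$ as a sum over the connected components $Z$ of the fixed-point set lying in the central fiber, using the Euler forms computed in \S\ref{subsecNORMALdeformCONE}. By the description of the fixed locus recalled above, these components are exactly $Z_0 \simeq Y$ and $\bu \simeq \mbox{Bl}_Y(X)$ (the component $X_\infty$ contributes nothing, as established in the proof of Proposition~\ref{theoLOCDFinvariant}). So the first step is to split the right-hand side of \eqref{eqDFinvLOCintrolap} into a $Z_0$--contribution and a $\bu$--contribution and treat each separately.

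For the $\bu$--contribution I would substitute $\euler(N_\bu^\mX)(V) = (\delta_\bu+1)/2\pi$ from \eqref{eqEulerclassesDEFORM}, note that the weight of the $S^1$--action normal to $\bu$ is $1$ (so the single normal weight $\we_0$ over $\bu$ equals $1$ while $\bu$ has no further codimension, $n_{\bu}=n$), and that $\iota_\bu^*\rho^\metric$ represents $2\pi c_1(\bu)$ up to the correction coming from the nontrivial normal line bundle $L=\mL|_\bu$; here the Adjunction formula gives $\iota_\bu^*\rho^\metric = \rho^{\metric_\bu} + \delta_\bu$ in cohomology, since $N_\bu^\mX \simeq L$ with $\delta_\bu = 2\pi c_1(L)$. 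Expanding $\frac{1}{\delta_\bu+1} = \sum_j (-1)^j \delta_\bu^j$ and keeping only the degrees that survive integration over the $n$--dimensional $\bu$, the three lines of \eqref{eqDFinvLOCintrolap} reorganize into a scalar-curvature term, a Ricci term, and the $\delta_\bu$--correction terms. The plan is to recognize the leading part as $-\pi \mF_{[\metric_\bu]}(J V)$-type integrals but with the constant $\cst_{[\omega]}$ rather than $\cst_{[\metric_\bu]}$, which forces the appearance of the difference $(\cst_{[\omega]}-\cst_{[\metric_\bu]})$ multiplying the volume term, and to collect the remaining $\delta_\bu^2/(\delta_\bu+1)$ pieces into the last displayed term of \eqref{eqDFdeformCONE}. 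The normalization \eqref{formLOCmEnormMOMdeformCONE} and the fact that $\mom$ is constant $=\mom_\bu$ on $\bu$ must be used to handle the $n\cst(\metric_\bu-\mom_\bu)^{n+1}$ line.

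For the $Z_0\simeq Y$--contribution I would use \eqref{eqEULERformZ0}, namely $\euler(N_{Z_0}^\mX)(V) = -\euler(N_{Z_0}^\mE)(V)/2\pi$, together with the splitting \eqref{NORM_Z_0splits}, to re-express the $Z_0$--terms of \eqref{eqDFinvLOCintrolap} as localization contributions on $\mE$ rather than on $\mX$. The point is that $Z_0$ is itself a component of $\mbox{Fix}_{S^1}\mE$, so after inserting the relation between the two Euler classes the $Z_0$--sum becomes precisely the $Z_0$--part of the localized Futaki invariant of the $S^1$--action on $(\mE,\metric_\mE)$ as in \eqref{eq:FUTinvLOCALIZED}. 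Combined with the $\excep$--component of $\mbox{Fix}_{S^1}\mE$ (whose normal data is given by $\euler(N_\excep^\mE)=(\delta_\excep+1)/2\pi$ from \eqref{eqEulerclassesDEFORM}), running the localization formula \eqref{formDHext} backwards on $\mE$ should reassemble the full Futaki invariant $-2\pi\mF_{\metric_\mE}(JV)$. The main obstacle is bookkeeping: I expect the genuinely delicate step to be matching constants and weights so that the $\bu$--contribution cleanly separates into a term that completes the Futaki invariant of $\mE$ and the explicit correction terms, while verifying that all the $\pi^*\rho^{FS}$ and $\pi^*\mom_{FS}$ pulled-back pieces and the curvature identifications (Adjunction on both $\bu\subset\mX$ and $\excep\subset\mE$) are consistent across the two fixed components. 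Getting the overall sign and the factor $4\pi$ in the last term right will require careful tracking of the $(-2\pi)$ conventions in \eqref{eq:equivEULERsplit} and the geometric-series expansion, and this is where I would concentrate the verification.
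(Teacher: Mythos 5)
Your strategy is the same as the paper's: apply Proposition~\ref{theoLOCDFinvariant} to the two fixed components $Z_0$ and $\bu$, use \eqref{eqEULERformZ0} to rewrite the $Z_0$--term as localization data on $\mE$ and run \eqref{formDHext} backwards there to produce $-2\pi\mF_{\metric_\mE}(JV)$ plus a leftover integral over $\excep$, and evaluate the $\bu$--term directly with \eqref{eqEulerclassesDEFORM}. However, two of your identifications would derail the computation. First, $N_\bu^\mX$ is not isomorphic to $L$ but to its dual: since $b^*(X\times\{0\})=\bu+\mE$ as divisors and $\pi^*\mO_{\bP^1}(1)$ is topologically trivial along $\bu$, one gets $N_\bu^\mX=\mO_\mX(\bu)|_\bu\simeq L^*$, hence $2\pi c_1(N_\bu^\mX)=-\delta_\bu$ and adjunction reads $[\iota_\bu^*\rho^\metric]=[\rho^{\metric_\bu}]-\delta_\bu$, the opposite sign to yours. (Your sign is also incompatible with $\euler(N_\bu^\mX)=(\delta_\bu+1)/2\pi$, which you quote: by \eqref{eq:equivEULERsplit} with weight $1$ this forces $2\pi c_1(N_\bu^\mX)=-\delta_\bu$.) This is not cosmetic: the term $+2\pi\int_\bu\mA_{\metric_\bu}^n\wedge\delta_\bu$ produced by the correct sign is exactly what combines with $-2\pi\int_\bu\mA_{\metric_\bu}^n\wedge\delta_\bu/(\delta_\bu+1)$ to yield the coefficient $4\pi$ in the last term of \eqref{eqDFdeformCONE}; with your sign you would land on a different formula.

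Second, the $\delta_\bu^2/(\delta_\bu+1)$ term does not arise from expanding $1/(\delta_\bu+1)$ inside the direct $\bu$--contribution, and the $\bu$--contribution does not ``complete'' the Futaki invariant of $\mE$. The step your sketch omits is this: after the backwards localization on $\mE$, the leftover integral $2\pi\int_\excep\iota_\excep^*\mA_\metric^n\wedge\iota_\excep^*\mB_{\metric_\mE,o}/\euler(N_\excep^\mE)$ must itself be pushed forward from $\excep$ to $\bu$ against the Poincar\'e dual of $\excep$ in $\bu$, i.e.\ $\int_\excep\alpha=\frac{1}{2\pi}\int_\bu\delta_\bu\wedge\alpha$ (the paper invokes \cite{voisin}), using $N_\excep^\bu\simeq\iota_\excep^*N_\mE^\mX\simeq\mO_\excep(-1)$. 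Only after adding this to the direct $\bu$--localization do the denominators combine into $2\pi\int_\bu\mA_{\metric_\bu}^n\wedge\bigl(\tfrac{n\cst_{[\omega]}}{n+1}\mA_{\metric_\bu}-\iota_\bu^*\rho^\metric\bigr)+2\pi\int_\bu\mA_{\metric_\bu}^n\wedge(\delta_\bu^2-\delta_\bu)/(\delta_\bu+1)$, from which \eqref{eqDFdeformCONE} follows via the corrected adjunction identity. The rest of your outline (the role of \eqref{formLOCmEnormMOMdeformCONE}, the origin of $\cst_{[\omega]}-\cst_{[\metric_\bu]}$, the weight--$1$ cancellation of the $+1$) agrees with the paper.
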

 
 The analysis of the weights of the action made in \S\ref{subsecNORMALdeformCONE} tells us that $\bu$ is the global minimum of $\mu$ and then \eqref{formLOCmEnormMOMdeformCONE} implies that $\mu_\bu<0$.
 
 The case of most interest is certainly when $Y$ is a divisor in $X$. In the smooth polarized setting, it is the only case that has to be taken into account according to \cite{RTslope}. Note that, in this case, $\bu\simeq X$ and the middle term of \eqref{eqDFdeformCONE} vanishes up to a normalization and the meaningful term seems to be the Futaki invariant of $\bP(N_Y^X\oplus \bC)$.
 
 \begin{rem}
  Formulas similar to \eqref{eqDFdeformCONE} appear in \cite{RTslope,RTmumford} and are used successfully to give many examples of unstable polarized K\"ahler manifolds and interesting relation between scalar curvature and Seshadri constant.    
 \end{rem}

%  
%   \begin{equation}\label{eqdeformDFlast2} -2\pi \mF_{\metric_{\mE}}(J\partial \theta) + 2\pi (-n\mom_\bu) (\cst_{[\omega]} - \cst_{[\metric_\bu]}) \int_\bu\metric_\bu^n + 2\pi \int_{\bu}   \frac{\mA_{\metric_\bu}^n\wedge \delta^2_\bu}{\delta_\bu +1}. 
%  \end{equation}

\subsubsection{Proof of Theorem~\ref{theoDEFORM_NC}} We pick as before a $S^1$--invariant metric $\metric$ on $\mX$ and a function $\mom :\mX \ra \bR$, Hamiltonian for the $S^1$--action induced by $\phi$ and normalized by the condition~\eqref{formLOCmEnormMOMdeformCONE}. Using Formula~\eqref{eqDFinvLOCintrolap} the Donaldson--Futaki invariant of the test configuration $(\mX,\metric,\phi,\pi)$ is the sum of $3$ integrals over each connected component of the fixed points set lying in the central fiber. In the case of the deformation to the normal cone there are two such components $Z_0$ and $\bu$, described above \S\ref{subsecNORMALdeformCONE}. We study first the integrals over $Z_0$, which is, using the notation of Section \ref{sectLOCDF}, 
$$\int_{Z_0}  \frac{\iota_{Z_0}^*\mA_{\metric}^n\wedge \iota_{Z_0}^*\mB_{\metric,o}}{\euler(N_{Z_0}^\mX)(\partial \theta)}$$ where
$\mB_{\metric,o} = \frac{n\cst_{[\omega]}}{n+1}\mA_{\metric} - (\rho^\metric +\sum_{i=0}^{\ell_{Z_0}}\langle\we_i,\partial \theta\rangle) + 1$ and $\ell_{Z_0}$ is the complex codimension of $Z_0$ in $\mE$. According to the discussion of the last section, one of the weight of the induced action on the normal bundle $N_{Z_0}^\mX$ is $1$, say $\langle \we_0^{Z_0},\partial_{\theta}\rangle =1$, corresponding to the summand $\iota_{Z_0}^*N_\mE^\mX$ in \eqref{NORM_Z_0splits}. It gives $\mB_{\metric,o} = \frac{n\cst_{[\omega]}}{n+1}\mA_{\metric} - (\rho^\metric +\sum_{i=1}^{\ell_{Z_0}}\langle\we_i,\partial \theta\rangle)$.

Using \eqref{eqEULERformZ0} we get 
\begin{equation}\label{eqdeformDF_Z0_1}\int_{Z_0}  \frac{\iota_{Z_0}^*\mA_{\metric}^n\wedge \iota_{Z_0}^*\mB_{\metric,o}}{\euler(N_{Z_0}^\mX)(\partial \theta)}= -2\pi\int_{Z_0}  \frac{\iota_{Z_0}^*\mA_{\metric}^n\wedge \iota_{Z_0}^*\mB_{\metric,o}}{\euler(N_{Z_0}^\mE)(\partial \theta)}. \end{equation} Each form appearing in the last integrant is closed so we can use the facts $[\rho^{\metric}] = [\rho^{\metric_{\mE}}] + \classO_{\mE}$ and $\iota_{Z_0}^*\classO_{\mE}=0$ to replace $\rho^{\metric}$ by $\rho^{\metric_{\mE}}$ in the last integral. Now, $\rho^{\metric_\mE} -\frac{1}{2}\Delta^{\metric_\mE}\mom_{\mE}$ is equivariantly closed on $\mE$ (with respect to the action induced by $\phi$) and $\iota_{Z_0}^*\frac{1}{2}\Delta^{\metric_\mE}\mom_{\mE} = -\sum_{i=1}^{\ell_{Z_0}}\langle \we_i^{Z_0},\partial_{\theta}\rangle$. So, using~\eqref{formDHext}, we get that \eqref{eqdeformDF_Z0_1} becomes 
\begin{equation}
 -2\pi\int_{Z_0}  \frac{\iota_{Z_0}^*\mA_{\metric}^n\wedge \iota_{Z_0}^*\mB_{\metric,o}}{\euler(N_{Z_0}^\mE)(\partial \theta)}= -2\pi\int_{\mE} \iota_{\mE}^*\mA_{\metric}^n\wedge \iota_{\mE}^*\mB_{\metric_{\mE},o}  + 2\pi \int_{E}  \frac{\iota_{E}^*\mA_{\metric}^n\wedge \iota_{E}^*\mB_{\metric_{\mE},o}}{\euler(N_{E}^\mE)(\partial \theta)} 
\end{equation}

The first term of $\int_{\mE} \iota_{\mE}^*\mA_{\metric}^n\wedge \iota_{\mE}^*\mB_{\metric,o}$ is cancelled thanks to the normalisation~\eqref{formLOCmEnormMOMdeformCONE} so it gives $\int_{\mE} \mA_{\metric_\mE}^n\wedge (\rho^{\metric_\mE} -\frac{1}{2}\Delta^{\metric_\mE}\mom_{\mE}) $ which is the Futaki invariant of the action induced by $\phi$ on $\mE$. We end up with 

\begin{equation}\label{eqdeformDF_Z0_2}
 -2\pi\int_{Z_0}  \frac{\iota_{Z_0}^*\mA_{\metric}^n\wedge \iota_{Z_0}^*\mB_{\metric,o}}{\euler(N_{Z_0}^\mE)(\partial \theta)}= -2\pi \mF_{\metric_{\mE}}(JV)  + 2\pi \int_{E}  \frac{\iota_{E}^*\mA_{\metric}^n\wedge \iota_{E}^*\mB_{\metric_{\mE},o}}{\euler(N_{E}^\mE)(\partial \theta)} 
\end{equation} Using \eqref{eqEulerclassesDEFORM}, the second term of the right hand side is 
\begin{equation}\label{eqdeformDF_Z0termE_1}
 (2\pi)^2 \int_{E}  \frac{\mA_{\metric_E}^n\wedge \iota_{E}^*\mB_{\metric_{\mE},o}}{\delta_E +1}
\end{equation}  where $\iota_{E}^*\mB_{\metric_{\mE},o} = \frac{n\cst_{[\omega]}}{n+1}\mA_{\metric_E} - (\rho^{\metric_\mE} +1)$.
 As before, we use that each form appearing in the last integrant is closed to replace $\iota^*[\rho^{\metric_\mE}]$ by any cohomologous form, namely by any representant of $\iota_E^*[\rho^{\metric_{\mX}}]  - 2\pi{c_1}(N_\mE^\mX).$ Moreover, by unicity of the equivariant decomposition of the normal bundle of $E$ in $\mX$ we know that $N_E^\bu \simeq \iota^*_EN_\mE^\mX \simeq \mO_E(-1)$ so that $2\pi c_1(N_E^\bu)=\delta_E$ and using \cite[p.263 (11.3.1)]{voisin} the integral over $E$ becomes 
 
 \begin{equation}\label{eqdeformDF_Z0termE_2}
 \begin{split}
 (2\pi)^2 \int_{E}  &\frac{\mA_{\metric_E}^n\wedge (\frac{n\cst_{[\omega]}}{n+1}\mA_{\metric_E} - \iota_{E}^*(\rho^{\metric} - \delta +1))}{\delta_E +1} \\  &= 2\pi \int_{\bu}  \delta_\bu \wedge \frac{\mA_{\metric_\bu}^n\wedge (\frac{n\cst_{[\omega]}}{n+1}\mA_{\metric_\bu} - \iota_{\bu}^*(\rho^{\metric} - \delta +1))}{\delta_\bu +1}.  
 \end{split}\end{equation}
% 
%  To sum up, we replaced the integral over $Z_0$ in the Donaldson--Futaki invariant by  
%  \begin{equation}\label{eqdeformDF_Z0termZ_0end} -2\pi \mF_{\metric_{\mE}}(J\partial \theta) + 2\pi \int_{\bu}  \delta_\bu \wedge \frac{\mA_{\metric_\bu}^n\wedge (\frac{n\cst_{[\omega]}}{n+1}\mA_{\metric_\bu} - (\iota_{\bu}^*\rho^{\metric}-\delta_\bu +1))}{\delta_\bu +1}. 
%  \end{equation}
 To get the whole Donaldson--Futaki invariant we still have to add the localization over $Z$ which is 
 \begin{equation} 2\pi \int_{\bu}   \frac{\mA_{\metric_\bu}^n\wedge \left(\frac{n\cst_{[\omega]}}{n+1}\mA_{\metric_\bu} - \iota_{\bu}^*\rho^{\metric}\right)}{\delta_\bu +1}
 \end{equation}  using \eqref{eqEulerclassesDEFORM} and that the weight of the action on $N_\bu^\mX$ is $1$. Therefore, the Donaldson--Futaki invariant of $(\mX,[\metric],\phi,\pi)$ is 
 \begin{equation}\label{eqdeformDFlast} -2\pi \mF_{\metric_{\mE}}(J\partial \theta) + 2\pi \int_{\bu} \mA_{\metric_\bu}^n\wedge \left(\frac{n\cst_{[\omega]}}{n+1}\mA_{\metric_\bu} - \iota_{\bu}^*\rho^{\metric}\right) + 2\pi \int_{\bu}   \frac{\mA_{\metric_\bu}^n\wedge (\delta_\bu^2-\delta_\bu)}{\delta_\bu +1}. 
 \end{equation}
 
 Developping the middle term with $\mA_{\metric_\bu} = \metric_\bu -\mom_\bu$ and $[\iota_{\bu}^*\rho^{\metric}] = [\rho^{\metric_\bu}] -\delta_Z$, we get \eqref{eqDFdeformCONE}.

\bibliographystyle{abbrv}

\begin{thebibliography}{}

\end{thebibliography}


\begin{thebibliography}{DDDD}

% 
%  \bibitem{aubin}
%    {\scshape T. Aubin}
%      \emph{Equations de type Monge-Amp\`ere sur les vari\'et\'es k\"ahl\'eriennes compactes}, C. R. Acad. Sci. Paris {\bf 283} (1976), 119--121.

\bibitem{ADVLN}
{\scshape C. Arezzo, A. Della Vedova, G. La Nave }
\emph{K--Destabilizing test configurations with smooth central fiber} In {\it Variational Problems in Differential Geometry} (London Mathematical Society Lecture Note Series, pp. 24--36). Cambridge University Press. 


\bibitem{Atiyah} 
{\scshape M.~F. Atiyah}, 
\emph{Convexity and commuting Hamiltonians},
Bull. London Math. Soc. {\bf 14} (1982), 1--15.

\bibitem{AtiyahBott} 
{\scshape M.~F. Atiyah, R. Bott} 
\emph{The moment map and equivarient cohomology},
Topology {\bf 23} (1984), 1--28.

\bibitem{Audin} 
{\scshape M.~Audin}, 
\emph{The topology of torus actions on symplectic manifolds}, Progress Math. {\bf 93},  1991.

%
% \bibitem{besse}
%  {\scshape A. Besse}
%    \emph{Einstein manifolds}, Classics in mathematics, Springer-Verlag Berlin, 1987.
%    
    
\bibitem{BerlineVergne}, 
{\scshape M. Berline, M. Vergne}, 
\emph{Z\'eros d'un champs de vecteur et classes charact\'eristiques \'equivarientes},
Duke {\bf 50} (1983), 539-549.

\bibitem{BerlineGetzlerVergne} 
{\scshape M. Berline, E. Getzler, M. Vergne}, \emph{Heat kernels and Dirac operators}, A Series of Comprehensive Studies in Mathematics 298, Springer-Verlag Berlin, 1992.

\bibitem{bott}
{\scshape R. Bott}
 \emph{A residue formula for holomorphic vector fields} J. Diff. Geom. {\bf 1} 1967, 311--330.  

 
\bibitem{BHJ}
{\scshape S. Boucksom, T. Hisamoto, M. Jonsson}
 \emph{Uniform K-stability, Duistermaat-Heckman measures and singularities of pairs} Ann. Inst. Fourier, 67, {\bf 2} (2017), p. 743--841.  

 
 
 % 
%  \bibitem{contactNOTE}
%   {\scshape C. P. Boyer,  K. Galicki}
%     \emph{A note on toric contact geometry}, J. Geom. Phys. {\bf 35} (2000), 288--298.    
%
% \bibitem{BG:book}
%  {\scshape C. P. Boyer,  K. Galicki}
%    \emph{Sasakian geometry}, Oxford Mathematical Monographs. Oxford University Press, Oxford, 2008.
%
%  \bibitem{BGS}
%   {\scshape C. P. Boyer,  K. Galicki, S. R. Simanca}
%     \emph{Canonical Sasakian metrics}, Commun. Math. Phys. {\bf 279} (2008), 705--733.
%% % 
% \bibitem{BoToIMRN}
% {\scshape C. P. Boyer, C. W. T{\o}nnesen-Friedman}
% \emph{Extremal Sasakian geometry on $S^3$-bundles over Riemann surfaces}, Int. Math. Res. Not. 2014 no. 20, 5510-5562.
% % 
% \bibitem{BoToJGA}
% {\scshape C. P. Boyer, C. W. T{\o}nnesen-Friedman}
% \emph{The {S}asaki join, {H}amiltonian 2-forms, and constant scalar curvature}, J. of Geom. Anal. {\bf 26} (2016), 1023--1060.
% 
% \bibitem{BoToNY}
%   {\scshape C. P. Boyer, C. W. T{\o}nnesen-Friedman}
%     \emph{On the Topology of some Sasaki-Einstein manifolds}, New York J. Math. {\bf 21} (2015), 57�72. 
%     
% \bibitem{BoToPaul}
%   {\scshape C. P. Boyer, C. W. T{\o}nnesen-Friedman}
%     \emph{The Sasaki join and admissible K\"ahler constructions}, J. Geom. Phys. 91 (2015), 29�39.  
%     
% \bibitem{BoToinfcon}
%   {\scshape C. P. Boyer, C. W. T{\o}nnesen-Friedman}
%     \emph{Simply connected manifolds with infinitely many toric contact structures and constant scalar curvature Sasaki metrics}, mathDG:arXiv.org/1404.3999.

 \bibitem{BHLT_EH}
   {\scshape C. P. Boyer, H. Huang, E Legendre C. W. T{\o}nnesen-Friedman}
    \emph{The Einstein--Hilbert functional and the Sasaki-Futaki invariant}, International Mathematics Research Notices, vol 2017, No. 7, pp 1972-1974; 
    
    
 \bibitem{BHL_DH}
   {\scshape C. P. Boyer, H. Huang, E Legendre}
    \emph{An application of the Duistertmaat-Heckman Theorem and its extensions in Sasaki Geometry},  Geom. Topol. 22 (2018), no. 7, 4205--4234.
    
    
%    
% \bibitem{BovC16}
% {\scshape  C. P. Boyer, C. van Coevering},
% \emph {Relative {K}-Stability and Extremal Sasaki Metrics}, to appear in Mathematical Research Letters, arXiv:1608.06184.

% 
%  \bibitem{calabi}
%    {\scshape E. Calabi}
%    \emph{Extremal K\"ahler metrics. II.}, Differential geometry and complex analysis, I. Chavel, H. M. Farkas Eds., 95--114, Springer, Berlin, 1985.
%    
%  
% \bibitem{don:estimate}
%   {\scshape S. K. Donaldson}
%     \emph{Interior estimates for solutions of Abreu's equation}, Collect. Math. {\bf 56} (2005), 103--142.
% 
%  \bibitem{don:extMcond}
%   {\scshape S. K. Donaldson}
%     \emph{Extremal metrics on toric surfaces: a continuity method}, J. Differential Geom. {\bf 79} (2008), 389--432.
% 
%      
% \bibitem{CollinsSz}
%  {\scshape T. Collins, G.Sz\'ekelyhidi}
%  \emph{K-Semistability for irregular Sasakian manifolds}, to appear in J. Differential Geom. mathDG:arXiv.org/1204.2230.
% 
 
 
       
 \bibitem{CG:SHEAFequivCOHOMOLOGY}
  {\scshape N. Chriss, V. Ginzburg}
    \emph{Representation Theory and Complex Geometry}, Modern Birkh\"auser Classics, 2nd ed. 2009.

       
 \bibitem{don:scalar}
  {\scshape S. K. Donaldson}
    \emph{Scalar curvature and stability of toric varieties}, J. Differential Geom. {\bf 62} (2002), 289--349.

% \bibitem{don:csc}
%      {\scshape S. K. Donaldson}
%       \emph{Constant scalar curvature metrics on toric surfaces}, Geom. Funct. Anal. {\bf 19} (2009), 83--136.
       
   \bibitem{DervanRoss}
  {\scshape R. Dervan, J. Ross}, \emph{K--stability for K\"ahler Manifolds},  Math. Res. Letter 24 {\bf 3}, 689--739.

% 
%  \bibitem{DuiHeck1}
%   {\scshape J. J. Duistermaat, G. Heckman}, \emph{On the variation in the cohomology of the symplectic form of the reduced space}, Inv. Math.
% {\bf 69}, 259-268 (1982).
%  
% \bibitem{DuiHeck2}
% {\scshape J. J. Duistermaat, G. Heckman}, \emph{Addendum}, Inv. Math.72, 153-158 (1983).      
      
   \bibitem{DingTian}
  {\scshape W. Ding, G. Tian}, \emph{ K\"ahler-Einstein metrics and the generalized Futaki invariants} Invent. Math. {\bf 110}, 315--335 (1992).

%   \bibitem{fultonIT} 
%   {\scshape W. Fulton}, \emph{Intersection Theory}, 2nd edition, Springer (1998).  
  
\bibitem{futaki}
  {\scshape A. Futaki}
      \emph{An obstruction to the existence of Einstein K\"ahler metrics}, Invent. Math. {\bf 73} (1983), no. 3, 437--443.
      
\bibitem{Fu2}
  {\scshape A. Futaki}
     \emph{K\"ahler--Einstein Metrics and Integral Invariants}. Lecture Notes in Mathematics, 1314, Springer-Verlag     
  
  
\bibitem{Fu3}
  {\scshape A. Futaki}
     \emph{The Ricci curvature of symplectic quotients of Fano manifold}. T\^ohoku Math. Journal, 39 (1987), 329-339.
     
     
% \bibitem{FutakiOnoWang}
%  {\scshape A. Futaki, H. Ono, G. Wang}
%	\emph{Transverse K\"ahler geometry of Sasaki manifolds and toric Sasaki--Einstein manifolds}, J. Differential Geom. {\bf 83} (3) (2009) 585--635.
%% 
 \bibitem{PGnote}
        {\scshape Gauduchon, Paul},
               \emph{Calabi's extremal {K}\"ahler metrics: an elementary introduction}, http://germanio.math.unifi.it/wp-content/uploads/2015/03/dercalabi.pdf
%   
%\bibitem[GMSY07]{GMSY06}
%Jerome~P. Gauntlett, Dario Martelli, James Sparks, and Shing-Tung Yau,
%  \emph{Obstructions to the existence of {S}asaki-{E}instein metrics}, Comm.
%  Math. Phys. \textbf{273} (2007), no.~3, 803--827. \MR{MR2318866
%  (2008e:53070)}    
%

\bibitem{GS} 
{\scshape V.~Guillemin, S.~Sternberg}, \emph{Convexity properties of the  moment mapping}, Inventiones mathematicae \textbf{67} (1982), 491--513.

%   
%   
 \bibitem{GSbook} 
 {\scshape  V.~Guillemin, S.~Sternberg},
 \emph{Supersymmetry and equivariant de Rham Theory}, Springer 1999.

 \bibitem{EG_intersectionTHEORY} 
  {\scshape Edidin, Dan; Graham, William}
  \emph{Equivariant intersection theory}, {Invent. Math}, {1996}, {\bf 131}, {595--634}
  
  \bibitem{EG} 
  {\scshape Edidin, Dan; Graham, William}
  \emph{Localization in equivariant intersection theory and the Bott residue formula}
  Amer. J. Math. 120 (1998), no. 3, 619--636.
  
  \bibitem{GH} 
  {\scshape P. Griffiths; J. Harris}, \emph{Principle of algebraic geometry}, Wiley Classics Library (1994).  
  %      
% \bibitem{KoOc73} 
%  {\scshape S. Kobayashi, T. Ochiai} \emph{Characterizations of complex
% projective spaces and hyperquadrics}, J. Math. Kyoto Univ. 13 (1973),  31--47.
%     
%
%\bibitem{HeSu12b}
%Weiyong He and Song Sun, \emph{The generalized {F}rankel conjecture in {S}asaki
%  geometry}, Int. Math. Res. Not. IMRN (2015), no.~1, 99--118. \MR{3340296}

 \bibitem{inoue} 
{\scshape E. Inoue}
\emph{Equivariant calculus on $\mu$--character and $\mu$K-stability of polarized schemes}, https://arxiv.org/abs/2004.06393
 
% %
% %
% \bibitem{JMW}  
%  {\scshape L.C. Jeffrey, A.-L. Mare, J. M. Woolf}
%  \emph{The Kirwan map, equivariant Kirwan maps, and their kernels}, Journal f\"ur die reine und angewandte Mathematik, Volume 2005, Issue 589, Pages 105–127
% 
 \bibitem{kirwan}  
  {\scshape F.C. Kirwan}
  \emph{Cohomology of Quotients in Complex and Algebraic Geometry}, Mathematical Notes 31, Princeton University Press, Princeton N. J., 1984.
 
 

   \bibitem{KL}
   {\scshape J. Keller, M. Lejmi}
    \emph{On the lower bounds of the $L^2$-norm of the Hermitian scalar curvature}, arxiv:math.DG./1702.01810
 
 
\bibitem{LangeOrbifoldMetric}  
 {\scshape C. Lange}
 \emph{Orbifolds from a metric viewpoint}, arxiv:1801.03472 
%
% \bibitem{nonUNIQcscS}  
% {\scshape E. Legendre}
% \emph{ Existence and non-uniqueness of constant scalar curvature toric Sasaki metrics}, Compositio Math. {\bf 147} (2011),
%   1613--1634.
%   
%    \bibitem{L:contactToric}
%   {\scshape E. Lerman}
%     \emph{Toric contact manifolds}, J. Symplectic Geom. {\bf 1} (2003), 785--828.
%   
   \bibitem{LT}
   {\scshape E. Lerman, S. Tolman}
     \emph{Hamiltonian torus actions on symplectic orbifolds and toric varieties}, Trans. Amer. Math. Soc. {\bf 349} (1997), 4201--4230.
 
%   
% \bibitem{reebMETRIC}
%  {\scshape D. Martelli,  J. Sparks, S.-T. Yau}
%    \emph{The geometric dual of {\it a}-- maximisation for toric Sasaki-Einstein manifolds}, Comm. Math. Phys.  {\bf 268}  (2006), 39--65.
%
% \bibitem{MSYvolume}
%  {\scshape D. Martelli,  J. Sparks, S.-T. Yau}
%    \emph{Sasaki-Einstein manifolds and volume minimisation}, Comm. Math. Phys. {\bf 280}  (2008), 611--673.

\bibitem{MaschlerThesis}
 {\scshape G. Maschler}
 \emph{Distinguished K\"ahler metrics and equivariant cohomology invariant}, Thesis, Stony Brook, August 1997.  
%  
%  \bibitem{McDuffSalamon}
%   {\scshape D. McDuff, D. Salamon}
%     \emph{Introduction to symplectic topology}, Oxford Mathematical Monographs (1995).
   
    \bibitem{McDuffTolman}
  {\scshape D. McDuff, S. Tolman}
    \emph{Topological properties of Hamiltonian circle action}, IMRN, Volume 2006, 1 (2006).
    
    
    \bibitem{Meinrenken}
  {\scshape E. Meinrenken}
  \emph{Symplectic Surgery and the Spin$^c$ Dirac Operator}, Advances in Maths. {\bf 134}, 2,( 1998), 240--277.


  \bibitem{odaka}
   {\scshape Y. Odaka}
     \emph{A generalization of the Ross-Thomas slope theory}, Osaka J. Math. {\bf 50} (2013), no. 1, 171--185. 
        
  %     
%  \bibitem{OVV}
%   {\scshape L. Ornea, M. Verbitsky, V. Vuletescu}
%     \emph{Blow ups of locally conformally K\"ahler manifolds}, I.M.R.N. 2013, no. 12, 2809--2821.
  
%     
%      \bibitem{matsushima}
%   {\scshape Y. Matsushima}
%   \emph{Sur la structure du groupe d'hom\'eomorphismes analytiques d'une certaine vari\'et\'e k\"al\'erienne}, Nagoya Math. J. {\bf 11} (1957), 145--150.
% 

\bibitem{RTslope}
    {\scshape J. Ross, R. Thomas}
    \emph{An obstruction to existence of constant scalar curvature metrics},   J. Diff. Geom. {\bf 72} (2006), 429--466.
    
\bibitem{RTmumford}
    {\scshape J. Ross, R. Thomas}
    \emph{A study of the Hilbert-Mumford criterion for the stability of pro-jective varieties}, Journ. Alg. Geom. 16 (2007), 201--255.


 \bibitem{RT}
    {\scshape J. Ross, R. Thomas}
     \emph{Weighted Projective Embeddings, Stability of Orbifolds, and Constant Scalar Curvature K\"ahler Metrics}, J. Diff. Geom. {\bf 88} (2011), 109-159. 
%   
%   \bibitem{Ruk95}
%   {\scshape P. Rukimbira}
%   \emph{Chern-Hamilton's conjecture and $K$-contactness}, Houston J. Math. {\bf 21} No 4 (1995), 709--718.
% 
%
\bibitem{stoppa}
{\scshape J. Stoppa}
\emph{K-stability of constant scalar curvature K\"ahler manifolds}, Advances in Mathematics {\bf 4} (2009), no. 221, 1397--1408.

 \bibitem{Schwarz}
{\scshape G.W. Schwarz}, 
\emph{Smooth functions invariant under the action of a compact Lie group}, Topology {\bf 14} (1975) 63--68

        
 \bibitem{zakarias}
 {\scshape Z. Sj\"ostr\"om-Dyrefelt}
 \emph{K-semistability of cscK manifolds with transcendental cohomology class}, J. Geom. An., {\bf 28},  4 (2018), pp. 2927--2960.
        
        
  \bibitem{Tian}
   {\scshape G. Tian}
     \emph{K\"ahler--Einstein metrics with positive scalar curvature}, Inventiones Math. {\bf 130} No 1 (1997), 1--37.
        
\bibitem{TIANbookalg}     
     {\scshape G. Tian}
     \emph{Canonical Metrics in Kähler Geometry}, Birkh\"auser Basel, 2000, 101 pages  
     
%  \bibitem{tian:conj}
%   {\scshape G. Tian}
 %   \emph{On Calabi's conjecture for complex surfaces with positive first Chern class}, Invent. Math. {\bf 101} (1990), 101--172.
 
    
%     
%  \bibitem{Y}
%   {\scshape S.T. Yau}
%     \emph{On the Ricci curvature of a compact K\"ahler manifold and the complex Monge-Amp\`ere equation I}, Comm. Pure Appl. Math., {\bf 31} (1978), 339-411.
%    
%    \bibitem{watson}
%  {\scshape B. Watson}
%     \emph{Manifold maps commuting with the Laplacian} J. Differential Geom. {\bf 8}, Number 1 (1973), 85--94.
     


  \bibitem{voisin}
   {\scshape C. Voisin}
     \emph{Th\'eorie de Hodge et g\'eom\'etrie alg\'ebrique complexe}, Cours sp\'ecialis\'es, coll. SMF  2002. 
             
  \bibitem{wang}
   {\scshape X. Wang}
     \emph{Height and GIT weight}. Math.  Res.  Lett. 19 (2012), no. 4,  909--926.
     
  \bibitem{yau}
   {\scshape S.-T. Yau}
     \emph{Open problems in geometry}. Differential geometry: Partial differential equations on manifolds (Los Angeles, CA, 1990), 1--28, Proc. Sympos. Pure Math., 54, Part 1, Amer. Math. Soc., Providence, RI, 1993.
     
     
    
\end{thebibliography}

% 
% \section{Appendix : Checking the signs..}
% \subsection{In the Futaki invariant}
% 
% 
% We follow closely Paul's notes \cite{PGnote}. 
% 
% On p.111 of \cite{PGnote} it is written : given a holomorphic vector field $V$, we write its dual decomposition $$V^\flat = \xi_H + df^V + d^ch^V$$ with normalisation $\int_M f^V  \vol_g=\int_M h^V  \vol_g=0$. The Futaki invariant of the K\"ahler manifold $(M, \omega, J, g)$ with K\"ahler class $\Omega= [\omega]$ is then $$F_\Omega(V) = \int_M f^V \mbox{scal}^g \vol_g.$$
% %Moreover, as explained in p. 66 of \cite{PGnote} we have $$(JV)^\flat = (J\xi)_H - dh^V + d^cf^V.$$
% 
% Now assume that $V$ is a Hamiltonian Killing vector field with potential $\mom$. That is $\omega(V,\cdot)=-d\mom$, with $\int_M h \vol_g=0$, and thus  
% $$-d\mom= \omega(V,\cdot)= g(JV,\cdot)$$ 
% That is $$(JV)^\flat= -d\mom.$$ 
% Therefore we have $$-F_\Omega(JV) = \int_M \mom \, \mbox{scal}^g \vol_g.$$
% 
% 
% 
% 
% \subsection{In the equivariant differential}
% 
% 
% If $d\mom= -\omega(V,\cdot)= - V_{\intprod}\omega$, $\biho =\omega- \mom$ and $d_\kt = d- V_{\intprod}$ then $$ d_\kt \biho = d_\kt\omega - d_\kt \mom = (-V_{\intprod}\omega) -d\mom = d\mom -d\mom \equiv 0. $$

\end{document}